\documentclass{amsart}
\usepackage[utf8]{inputenc}
\usepackage{amssymb}
\usepackage{hyperref}
\usepackage[final]{showkeys}
\usepackage[all]{xy}

\theoremstyle{definition}
\newtheorem{mydef}{Definition}[section]

\newtheorem{lemma}[mydef]{Lemma}
\newtheorem{thm}[mydef]{Theorem}
\newtheorem{theo}[mydef]{Theorem}

\newtheorem{cor}[mydef]{Corollary}

\newtheorem{propo}[mydef]{Proposition}
\newtheorem{defin}[mydef]{Definition}
\newtheorem{example}[mydef]{Example}

\newtheorem{remark}[mydef]{Remark}

\newtheorem{notation}[mydef]{Notation}
\newtheorem{fact}[mydef]{Fact}



\newcommand{\colim}{\operatorname{colim}}

\newcommand{\seq}[2]{\left( #1 \right)_{#2}}

\newbox\noforkbox \newdimen\forklinewidth
\forklinewidth=0.3pt \setbox0\hbox{$\textstyle\smile$}
\setbox1\hbox to \wd0{\hfil\vrule width \forklinewidth depth-2pt
 height 10pt \hfil}
\wd1=0 cm \setbox\noforkbox\hbox{\lower 2pt\box1\lower
2pt\box0\relax}
\def\unionstick{\mathop{\copy\noforkbox}\limits}
\newcommand{\nf}{\unionstick}

\setbox0\hbox{$\textstyle\smile$}
\setbox1\hbox to \wd0{\hfil{\sl /\/}\hfil} \setbox2\hbox to
\wd0{\hfil\vrule height 10pt depth -2pt width
               \forklinewidth\hfil}
\wd1=0 cm \wd2=0 cm
\newbox\doesforkbox
\setbox\doesforkbox\hbox{\lower 0pt\box1 \lower
2pt\box2\lower2pt\box0\relax}

\newcommand{\ck}{\mathcal{K}}
\newcommand{\cm}{\mathcal{M}}
\newcommand{\cl}{\mathcal{L}}
\newcommand{\id}{\operatorname{id}}

\newcommand{\Rmod}{\mathbf{RMod}}

\newcommand{\Ab}{\mathbf{Ab}}

\author[Lieberman]{Michael Lieberman}
\email{qmlieberman@vutbr.cz}
\address{Institute of Mathematics, Faculty of Mechanical Engineering, Brno University of Technology, Brno, Czech Republic}

\author[Rosick\'y]{Ji\v r\'i Rosick\'y}
\email{rosicky@math.muni.cz}
\urladdr{http://www.math.muni.cz/\textasciitilde rosicky/}
\address{Department of Mathematics and Statistics, Faculty of Science, Masaryk University, Brno, Czech Republic}
\thanks{The second author is supported by the Grant agency of the Czech Republic under the grant 19-00902S}

\author[Vasey]{Sebastien Vasey}

\title{Induced and higher-dimensional stable independence}
\date{\today}

\begin{document}

\begin{abstract}
	We provide several crucial technical extensions of the theory of stable independence notions in accessible categories.  In particular, we describe circumstances under which a stable independence notion can be transferred from a subcategory to a category as a whole, and examine a number of applications to categories of groups and modules, extending results of \cite{MAFuchs}.  We prove, too, that under the hypotheses of \cite{lrv-cell}, a stable independence notion immediately yields higher-dimensional independence as in \cite{multidim-v2}. 
\end{abstract}

\subjclass{Primary: 03C45. Secondary: 03C48, 03C52, 13C60, 18C35.}
\keywords{Stable independence, accessible categories, classification theory, categories of modules}

\maketitle

\section{Introduction}

We here concern ourselves with stable independence in the context of accessible categories.  This notion has its origins in the model-theoretic concept of stable nonforking, which can be thought of on one hand as a freeness property of type extensions and, on the other, as a notion of freeness or independence for amalgams of models.  The latter perspective, taken to its logical conclusion, leads to a formulation of stable independence as a property of commutative squares in a general category, described by a family of purely category-theoretic axioms, cf. \cite{indep-categ-advances}.  This generalization is of practical value: often in mathematics we begin with a nice category of objects $\ck$, then restrict to a particular family of desirable morphisms $\cm$, obtaining a subcategory, $\ck_\cm$, which loses much of the useful structure of $\ck$.  Even if $\ck$ is locally finitely presentable (say, $\ck=\Ab$, the category of abelian groups and homomorphisms), if we take $\cm$ to be a family of monomorphisms (say, $\cm$ consists of the pure monomorphisms in $\Ab$), the category $\ck_\cm$ will no longer have the pushouts available to us in $\ck$.  A central theme of \cite{lrv-cell} is that stable independence---or, rather, stably independent squares---provide a workable alternative to the missing pushouts, sufficient for many applications.

Of greater importance, perhaps, is \cite[3.1]{lrv-cell}, which asserts, roughly speaking, that given a pair $(\ck,\cm)$, $\ck$ cocomplete, $\ck_\cm$ admits a stable independence notion just in case the family of morphisms $\cm$ is cofibrantly generated; that is, generated by pushouts, transfinite compositions and retracts from a \emph{set} (as opposed to a proper class) of morphisms.  In this case, the stable independence notion must be precisely the one given by $\cm$-effective squares, generalizing the effective unions of \cite{effective-unions}.  This provides a useful link between stable independence and, for example, (combinatorial, i.e. cofibrantly generated) cellular categories, which occur naturally in topology and homological algebra.  The translation it offers has already proved fruitful: arguing by way of stable independence, one can give a very brief proof of (a special case of) the fact that combinatorial structures are left-induced, \cite[3.11]{lrv-cell}; the original proof of this fact, in \cite{mr}, requires a great deal of heavy category-theoretic machinery, including the {\em good colimits} of Lurie (see \cite{fat-small-obj}).  On the other hand, an analysis of questions involving stable independence by way of cofibrant generation often leads to more efficient proofs, and useful new results, particularly in algebra.  For example, \cite[4.3]{lrv-cell} resolves an open question concerning the circumstances under which Ext-orthogonality classes of modules admit a stable independence notion, and, while the proof of the cofibrant generation of pure monomorphisms in locally presentable additive categories in \cite{lprv-purecofgen-v3} does not use stable independence, it would not have been evident were it not for this connection.  

We pursue further applications here, first deriving stable independence for a host of algebraic categories.  For this, we require an essential technical lemma.  A crucial part of the definition of a stable independence relation on a category $\ck$ is that it be accessible (Definition~\ref{defstab}(2)), meaning, roughly speaking, that the independent squares form an accessible subcategory of the arrow category, $\ck^2$: without this property, we say the independence relation is {\em weakly stable}.  Accessibility, which corresponds loosely to the local character of nonforking, is neither particularly natural nor easy to verify in practice.  For a {\em continuous} independence notion, on the other hand, we require only that the appropriate subcategory of $\ck^2$ be closed under directed colimits---this is far more common, as we will see in Section~\ref{secstab}.  The essential result of Section~\ref{seclift} is that, in many cases, we can infer the existence of a stable independence notion from that of a continuous weakly stable independence notion (Theorem~\ref{cofinal-thm}).  In particular, if an accessible category $\ck$ has a continuous weakly stable independence notion, and has a stable independence notion on a sufficiently nicely embedded subcategory, then in fact the independence notion on $\ck$ must be stable. Thanks to recent work on stability---in the sense of Galois types---in categories of groups and modules (e.g. \cite{univ-modules-pp-v2}, \cite{MAFuchs}), we obtain continuous weakly stable independence notions for free in, for example, Abelian $p$-groups with pure embeddings, or torsion $R$-modules with pure embeddings.  A model-theoretic argument shows that these weakly stable notions restrict to stable independence on subcategories of sufficiently saturated (that is, universal injective) objects, and Theorem~\ref{cofinal-thm} allows us to lift this stability to the categories themselves.  The results are summarised in Theorems~\ref{thmmodcats} and \ref{thmgpcats}.

Finally, we show that in the setting of \cite{lrv-cell}, the existence of a stable independence notion---essentially, a family of nice commutative squares---implies the existence of higher-dimensional stable independence: nice families of cubes, hypercubes, etc.  We note that higher-dimensional amalgams of this form have played an important role in the analysis of categoricity in $L_{\omega_1,\omega}$ (\cite{she83} and \cite{she83b}, where they were first introduced) and, more recently, in connection with the categoricity conjecture for abstract elementary classes (\cite{multidim-v2}).  In the aforementioned cases, it is a significant technical challenge to ensure that the existence of such amalgams in low dimensions can be pushed to higher dimensions (\cite{multidim-v2} needs the weak generalized continuum hypothesis, for example).  As our underlying category is locally presentable---hence has arbitrary pushouts---existence is more or less automatic. 

We assume a familiarity with accessible categories (\cite{adamek-rosicky}, \cite{makkai-pare}).  A passing acquaintance with the category-theoretic formulation of stable independence relations (\cite{indep-categ-advances}, \cite{lrv-cell}) would be very useful in providing motivation for the discussion that follows. 

We are grateful to Marcos Mazari-Armida for his detailed feedback on an early draft of this paper.  We are grateful, too, to the anonymous referee for comments that have led to several important clarifications.

\section{Lifting stable independence}\label{seclift}

While a much fuller picture can be given of the transfer of stable independence notions along adjunctions, we here concern ourselves with a very limited special case.  In particular, we show that if an accessible category $\ck$ has a \emph{$\aleph_0$-continuous} weakly stable independence notion, and has a stable independence notion on a sufficiently nice subcategory, then in fact $\ck$ has a stable independence notion.  This allows us to extend certain stability results concerning classes of modules in \cite{MAFuchs}: while that paper establishes that a number of such classes are stable in the sense of Galois types, we show that they possess stable independence notions.  This is stronger: for example, the category of $\aleph_1$-free groups and pure embeddings is Galois-stable (see \cite[5.9]{mazarmnonelem}), but does not have a stable independence notion, albeit for a rather trivial reason---this category does not have the amalgamation property.

For the sake of completeness, we briefly recall the definitions of the essential notions here.  More detailed treatments can be found in \cite{indep-categ-advances} and \cite{lrv-cell}.

\begin{defin}\label{defwstab} Let $\ck$ be a category.
	\begin{enumerate}
	\item We define an \emph{independence notion} (or \emph{independence relation}) on $\ck$ as a class $\nf$ of commutatives squares (called \emph{$\nf$-independent}, or simply \emph{independent})	such that, for any commutative diagram 
	$$\xymatrix{ & & E\\
	B\ar[r]\ar@/^/[rru] & D\ar[ur] & \\
	A\ar[r]\ar[u] & C\ar[u]\ar@/_/[uur] & }$$
	the square spanning $A$, $B$, $C$, and $D$ is independent if and only if the square spanning $A$, $B$, $C$, and $E$ is independent.
	\item We say that an independence notion $\bf$ on $\ck$ is \emph{weakly stable} if it satisfies the following conditions:
	\begin{enumerate}
	\item Symmetry: $\nf$ is closed under reflection across the diagonal from bottom left to top right.
	\item Transitivity: $\nf$ is closed under vertical and horizontal composition of squares.
	\item Existence: Any span $B\leftarrow A\to C$ can be completed to a $\nf$-independent square.
	\item Uniqueness: Any two completions of a span $B\leftarrow A\to C$ to $\nf$-independent squares are equivalent up to amalgamation, in the sense of the diagram in (1) above.
	\end{enumerate}
	\end{enumerate}

\end{defin}

In this section, we will concern ourselves largely with the tension between \emph{weakly stable} and \emph{stable} independence, where the latter builds in a crucial accessibility condition.

\begin{defin}\label{defstab} Let $\nf$ be an independence notion on category $\ck$, satisfying the existence and transitivity properties.
	\begin{enumerate} 
	\item We denote by $\ck_\downarrow$ the subcategory of the arrow category $\ck^2$ whose objects are $\ck$-morphisms, and whose morphisms are $\nf$-independent squares.  (Our assumption on $\nf$ in the preamble of this definition is needed only to ensure that $\ck_\downarrow$ does indeed form a category.)
	\item We say that $\nf$ is \emph{$\lambda$-accessible}, $\lambda$ an infinite regular cardinal, if the category $\ck_\downarrow$ is $\lambda$-accessible.  We say that $\nf$ is \emph{accessible} if it is $\lambda$-accessible for some $\lambda$.
	\item We say that $\nf$ is a \emph{stable independence notion} it if is weakly stable and accessible.
	\end{enumerate}
\end{defin}

We note that in the model-theoretic context, weakenings of stability---for example, simplicity---are obtained by weakening existence and/or uniqueness, these being the more difficult property to verify, model-theoretically.  Here accessibility (which corresponds roughly to what model theorists might refer to as \emph{local and finite character}) is far thornier: by and large, it is easier to detect when a weakly stable independence notion satisfies the (weaker) condition of $\lambda$-continuity:

\begin{defin}\label{defcts}Let $\nf$ be an independence notion on category $\ck$, satisfying the existence and transitivity properties.  We say that $\nf$ is $\lambda$-continuous if $\ck_\downarrow$ is closed in $\ck^2$ under $\lambda$-directed colimits.\end{defin}

That $\lambda$-continuity follows from $\lambda$-accessibility, incidentally, is \cite[3.26]{indep-categ-advances}.

We now turn to the central result of this section, which ensures that if an accessible category $\ck$ with all morphisms monomorphisms has a continuous, weakly stable independence notion and a stable independence notion on a sufficiently nice subcategory $\cl$, then in fact there is a stable independence notion on $\ck$ itself.

In this context, ``sufficiently nice'' will mean, precisely, that $\cl$ is a \emph{cofinal} subcategory of $\ck$:

\begin{defin}\label{defcofin}
	We say that a functor $F: \cl \to \ck$ is \emph{cofinal} if for any $K \in \ck$, and any finite sequence $\seq{F L_i \xrightarrow{f_i} K}{i \in I}$, there exists $L \in \cl$, $K \xrightarrow{g} F L$ and $\seq{F L_i \xrightarrow{F g_i} F L}{i \in I}$ such that $F g_i = g\circ f_i$ for all $i \in I$.
  
  We say that a subcategory $\cl$ of a category $\ck$ is \emph{cofinal} if the inclusion is cofinal.
\end{defin}

\begin{example}\label{cofinal-ex} \
  \begin{enumerate}
  \item If $\ck_\ast$ is a full subcategory of $\ck$ and for every $M \in \ck$ there exists $M \to N$ with $N \in \ck_\ast$, then $\ck_\ast$ is a cofinal subcategory of $\ck$. Of course, by choosing $I=\emptyset$, every cofinal subcategory has the latter property.
  \item The category of $\lambda$-saturated models of an elementary class is a cofinal subcategory. This generalizes to $\mu$-AECs, \cite[7.7]{indep-categ-advances}.
  \end{enumerate}
\end{example}

\begin{remark}\label{remscof}
	We note that this notion of cofinality is somewhat weaker than the one found elsewhere in the category-theoretic literature, e.g. \cite[0.11]{adamek-rosicky}.  One can show, however, that the two definitions coincide if one assumes the amalgamation property.
\end{remark}

\begin{theo}\label{cofinal-thm}
Let $\ck$ be an accessible category and let $\cl$ be an accessibly embedded cofinal full subcategory of $\ck$. If:

  \begin{enumerate}
  \item All morphisms of $\ck$ are monos.
  \item $\ck$ has an $\aleph_0$-continuous weakly stable independence notion.
  \item $\cl$ has a stable independence notion.
  \end{enumerate}

  Then $\ck$ has a stable independence notion.
\end{theo}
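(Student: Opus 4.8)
The plan is to reduce the theorem to the assertion that the category $\ck_\downarrow$ attached to the $\aleph_0$-continuous weakly stable notion $\nf$ of hypothesis~(2) (Definition~\ref{defstab}) is accessible, and then to extract that accessibility from the stable independence notion assumed on $\cl$. The first move is a canonicity argument. I would check that the restriction $\nf\rest\cl$ --- the class of $\nf$-independent squares all of whose vertices lie in $\cl$ --- is itself a weakly stable independence notion on $\cl$: symmetry and transitivity are inherited, since reflections and composites of $\cl$-squares are again $\cl$-squares; for existence, complete a span in $\cl$ to an $\nf$-independent square in $\ck$, push the apex into $\cl$ using cofinality (the consequence of Definition~\ref{defcofin} recorded in Example~\ref{cofinal-ex}(1)), and invoke the base-change equivalence of Definition~\ref{defwstab}(1) to see that the enlarged square is still $\nf$-independent; uniqueness is handled the same way, pushing into $\cl$ the amalgam that $\nf$ supplies in $\ck$. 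By the canonicity of stable independence (see \cite{indep-categ-advances}), the stable notion assumed on $\cl$ must therefore coincide with $\nf\rest\cl$, so that $\cl_\downarrow$ --- the subcategory of $\cl^2$ of $\nf$-independent squares --- is an accessible category and a full subcategory of $\ck_\downarrow$.

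Next I would assemble the structural data. Since $\ck$ is accessible, so is the arrow category $\ck^2$; hypothesis~(2) makes $\ck_\downarrow$ closed under directed colimits in $\ck^2$, and these colimits are computed there; and for a sufficiently large regular $\lambda$ the categories $\ck$, $\ck^2$, $\cl$, $\cl_\downarrow$ are all $\lambda$-accessible, with $\cl$ closed under $\lambda$-directed colimits in $\ck$ and, consequently, $\cl_\downarrow$ closed under $\lambda$-directed colimits in $\ck_\downarrow$. By hypothesis~(1) every morphism of $\ck$, hence (the inclusion $\ck_\downarrow\to\ck^2$ being faithful) every morphism of $\ck_\downarrow$, is a monomorphism, so the colimit cocones in $\ck_\downarrow$ behave like unions of subobjects. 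The essential consequence of cofinality is that every object $A\to B$ of $\ck_\downarrow$ admits a $\ck_\downarrow$-morphism into $\cl_\downarrow$: push $A$ into $\cl$ to get $A\to A'$, complete the span $A'\leftarrow A\to B$ to an $\nf$-independent square with apex $D$, push $D$ into $\cl$ to get $D\to B'$, and apply Definition~\ref{defwstab}(1) to obtain an $\nf$-independent square from $A\to B$ to $A'\to B'\in\cl_\downarrow$; after enlarging $\lambda$ so that the cofinality witnesses can be chosen $\lambda$-presentable, this morphism can be taken to have $\lambda$-presentable target whenever its source is $\lambda$-presentable in $\ck^2$.

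The heart of the argument is then to show that $\ck_\downarrow$ is $\lambda$-accessible for suitable $\lambda$. Since $\ck_\downarrow$ already has $\lambda$-directed colimits, it would suffice to exhibit a set $\mathcal{S}$ of $\lambda$-presentable objects of $\ck_\downarrow$ such that every object of $\ck_\downarrow$ is a $\lambda$-directed colimit, computed in $\ck_\downarrow$, of objects from $\mathcal{S}$; the natural candidate for $\mathcal{S}$ is the essentially small family of arrows between $\lambda$-presentable objects of $\ck$. Given $X\in\ck_\downarrow$, I would map it by an $\nf$-independent square into an object $Y$ of $\cl_\downarrow$, use the $\lambda$-accessibility of $\cl_\downarrow$ to write $Y$ as a $\lambda$-directed colimit of $\lambda$-presentable objects of $\cl_\downarrow$ --- a colimit which, by the previous paragraph, is also one in $\ck_\downarrow$ --- and then transfer this presentation back to $X$, replacing each approximant by an $\nf$-independent completion of a suitable span lying below it and reconciling these completions, by the uniqueness property of Definition~\ref{defwstab} together with the mono hypothesis and $\aleph_0$-continuity, so as to present $X$ itself as a $\lambda$-directed colimit in $\ck_\downarrow$ of members of $\mathcal{S}$. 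Two points would require separate verification: that the members of $\mathcal{S}$ are $\lambda$-presentable in $\ck_\downarrow$ (here the uniqueness property and $\lambda$-directedness do the work), and that they are colimit-dense in $\ck_\downarrow$ (here the accessible cofinal subcategory $\cl_\downarrow$ is indispensable, providing both a single cardinal $\lambda$ at which all the closing-off terminates and a stock of targets for the independent completions). Alternatively, one may hope to package the last step as an application of a general lemma on accessible categories --- that the domain of a faithful, colimit-preserving functor into an accessible category is accessible once an appropriate solution-set condition, here supplied by cofinality, is met.

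The main obstacle is precisely this descent from $\cl_\downarrow$ to $\ck_\downarrow$, and it is genuine: weak stability offers no left-cancellation or monotonicity for independent squares, so the transition maps arising from a naive approximation of an object of $\ck_\downarrow$ --- inclusions of subobjects, for instance --- are not $\nf$-independent and cannot be mended after the fact. This is exactly why the theorem requires a \emph{stable}, not merely weakly stable, independence notion on $\cl$: on $\cl$ the independence relation carries the extra closure that makes the required $\lambda$-directed presentations available, and the content of the theorem is that, under hypotheses~(1) and~(2), these presentations can be propagated to all of $\ck$ without inflating the size of the approximants past $\lambda$. Checking that the construction converges, and that $\ck^2$ together with $\aleph_0$-continuity really do force the colimit back into $\ck_\downarrow$, is the technical core.
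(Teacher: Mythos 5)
Your setup matches the paper's: reduce to accessibility of $\ck_\downarrow$, note that the restriction of $\nf$ to $\cl$ is weakly stable and hence, by canonicity, coincides with the stable notion assumed on $\cl$, and fix a single regular cardinal at which $\ck$, $\cl$ and $\cl_\downarrow$ are all accessible. Your observation that cofinality lets every object of $\ck_\downarrow$ map by an independent square into $\cl_\downarrow$ is also correct. But the step you yourself flag as ``the technical core'' --- transferring a $\lambda$-directed presentation of the target in $\cl_\downarrow$ back to the original object of $\ck_\downarrow$ --- is exactly the content of the theorem, and your sketch (``replace each approximant by an independent completion of a span lying below it and reconcile by uniqueness'') does not work as stated, for the reason you identify: weak stability gives no cancellation, so the mended transition squares need not be independent, and uniqueness alone will not repair them. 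Saying that a stable notion on $\cl$ ``carries the extra closure'' and that the presentations ``can be propagated'' is a restatement of the theorem, not an argument. So there is a genuine gap.

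The paper closes this gap with a specific mechanism you would need some version of. It defines $\ck^\ast$ to be the class of morphisms of $\ck$ that are $\mu$-directed colimits \emph{in $\ck_\downarrow$} of morphisms between $\mu$-presentable objects, and proves three things: (i) $\ck^\ast$ is closed under composition, and (ii) $\ck^\ast$ is left cancellable in $\ck^2$ (if $gf\in\ck^\ast$ then $f\in\ck^\ast$), both by zig-zag comparisons of canonical colimit decompositions (for (ii), working in the category of composable pairs); and (iii) every object $M$ of $\ck$ admits a morphism $M\to N$ with $N\in\cl$ that is itself in $\ck^\ast$ --- note this is strictly stronger than your claim that every object of $\ck_\downarrow$ maps into $\cl_\downarrow$, and it is proved by induction on presentation rank, using that the mono hypothesis makes $\ck$ well $\lambda^+$-filtrable, so that $M$ is a colimit of a smooth chain along which one builds a chain of independent squares into $\cl$. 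Finally, given an arbitrary $f:M\to N$, one does not compare presentations directly: one maps $M$ into $\cl$ by a $\ck^\ast$-morphism, amalgamates $f$ with it inside a $\lambda$-saturated object of $\ck$, pushes that object into $\cl$ by cofinality, and then (i) plus left cancellability (ii) force $f\in\ck^\ast$. The mono hypothesis thus enters through filtrability in (iii), and the saturation step is the device that relates $f$ to the $\cl$-side data --- two ingredients absent from your outline, without which the descent you describe cannot be carried out.
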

\begin{proof}
Let $\nf$ be an $\aleph_0$-continuous weakly stable independence notion on $\ck$. The $\aleph_0$-continuity of $\nf$ implies in particular that $\ck$ has directed colimits, so $\cl$ has directed bounds. It is easy to check that $\nf$ restricted to $\cl$ is a weakly stable independence notion, hence by the canonicity theorem (\cite[A.6]{lrv-cell}) it must be stable. We wish to show that $\nf$ is stable, so must show, in particular, that $\ck_{\downarrow}$ is accessible. Since $\nf$ restricted to $\cl$ is accessible, $\cl$ itself must be an accessible category. Taking $\mu$ bigger if needed, we may assume without loss of generality that $\ck$, $\cl$, and $\cl_{\downarrow}$ are $\mu$-accessible. Let $\ck_\mu$ denote the full subcategory of $\ck$ consisting of $\mu$-presentable objects. Let $\ck^\ast$ consist of those morphisms in $\ck$ which are $\mu$-directed colimits in $\ck_\downarrow$ of morphisms in $\ck_\mu^2$. Recalling that the morphisms in $\ck_\mu^2$ are precisely the $\mu$-presentable objects of $\ck^2$, it suffices to see that 
$\ck^\ast=\ck^2$. Clearly, $\ck_\mu^2=\ck_\mu^\ast$ and $\cl^2\subseteq\ck^\ast$.

 Concretely, we will prove the following:
\begin{itemize}
  \item[(i)] $\ck^\ast$ is closed under composition.
  \item[(ii)] $\ck^\ast$ is left cancellable in $\ck^2$.
  \item[(iii)] If $M \in \ck$, there exists $M\to N$ in $\ck^\ast$ with $N$ in $\cl$.
  \end{itemize}

To verify that this is sufficient, take $f:M\to N$ in $\ck^2$.  We will show that, under assumptions (i) and (ii), if $M$ satisfies the condition of (iii), then $f$ must be in $\ck^\ast$.  To begin, since $M$ satisfies (iii), there is $g:M\to M'$ in $\ck^\ast$ with $M'$ in $\cl$. Without loss of generality, $f\in\ck^2_\lambda$ and $\ck^2$ is $\lambda$-accessible for some $\lambda>\mu$. Following \cite{rosicky-sat-jsl}, there is a morphism $h:M'\to M^\ast$ where $M^\ast$ is $\lambda$-saturated in $\ck$. Thus there is $t:N\to M^\ast$ such that $tf=hg$. Following \ref{cofinal-ex}(1), there is $p:M^\ast\to L$ with $L$ in $\cl$. Since $ph\in\cl^2\subseteq\ck^\ast$, (i) implies that $phg\in\ck^\ast$. Hence, because $ptf=phg$, (ii) implies that $f\in\ck^\ast$, as desired.  We now prove statements (i)-(iii):\\

{\noindent\bf(i)} Consider $f:K\to L$ and $g:L\to M$ in $\ck^\ast$. Let $f=\colim_i f_i$ and $g=\colim_j g_j$ be $\mu$-directed colimits 
in $\ck_\downarrow$ of morphisms in $\ck_\mu^2$. We may assume that $(k_{ii'},l_{ii'}^1):f_i\to f_{i'}$ and 
$(l_{jj'}^2,m_{jj'}):g_j\to g_{j'}$ are canonical diagrams of $f$ and $g$ with respect to $\ck^2_\lambda$ in $\ck_\downarrow$.
Given $i_0\in I$ and $j_0\in J$, there is $j_1\geq j_0$ in $J$ such that $l_{i_0}^1:L_{i_0}^1\to L$ factorizes through 
$l_{j_1}^2:L_{j_1}^2\to L$. Similarly, there is $i_1\geq i_0$ in $I$ such that $l_{j_1}^2$ factorizes through $l_{i_1}^1$. Continuing this procedure and taking a colimit, we get $L_{i_\omega}^1\cong L_{j_\omega}^2$. Then $f_{i_\omega}$ and $g_{j_\omega}$ are composable
and $g_{i_\omega} f_{j_\omega}$ is in $\ck^\ast$. Continuing in this way, we obtain that $gf\in\ck^\ast$.\\

{\noindent\bf(ii)} Consider $f:K\to L$ and $g:L\to M$ such that $gf\in\ck^\ast$. Let $\ck^{\to\to}$ be the category of composable pairs of morphisms in $\ck$ and $(f,g)=\colim_i (f_i,g_i)$ be a $\mu$-directed colimit in $\ck^{\to\to}$ of morphisms in $\ck_\mu^{\to\to}$. Let 
$gf=\colim_j h_j$ be a $\mu$-directed colimit in $\ck_\downarrow$ of morphisms in $\ck_\mu^2$. Here, $f_i:K^1_i\to L_i$,
$g_i:L_i\to M^1_i$ and $h_j:K^2_j\to M^2_j$. As in the preceding argument, for every $i_0\in I$ and $j_0\in J$ there are $i_\omega\geq i_0$ 
in $I$ and $j_\omega\geq j_0$ in $J$ such that $K^1_{i_\omega}\cong K^2_{j_\omega}$. In the same way, for every $i_0\in I$ 
and $j_0\in J$ there are $i_\omega\geq i_0$ in $I$ and $j_\omega\geq j_0$ in $J$ such that $M^1_{i_\omega}\cong M^2_{j_\omega}$.
By iterating both procedures, we show that  for every $i_0\in I$ and $j_0\in J$ there are $i_\omega\geq i_0$ in $I$ and $j_\omega\geq j_0$ in $J$ such that $M^1_{i_\omega}\cong M^2_{j_\omega}$ and $K^1_{i_\omega}\cong K^2_{j_\omega}$. Then 
$h_{j_\omega}=g_{i_\omega}f_{i_\omega}$, which implies that $f_{i_\omega}\to f$ is a morphism in $\ck_\downarrow$. Hence 
$f\in\ck^\ast$.\\

{\noindent\bf(iii)} Assume that the claim does not hold and let $M$ have the smallest presentation rank $r$ among objects violating (iii). Following
\cite[4.2]{beke-rosicky}, $r=\lambda^+\geq\mu^+$. Under the hypothesis of the theorem, $\ck$ is well $\lambda^+$-filtrable (see \cite[8.8(2)]{internal-improved-v3}, noting that, since the morphisms in $\ck$ are monos, filtrability and well-filtrability coincide).  In particular, $M=\colim M_i$ can be expressed as the colimit of a smooth chain
of $\lambda$-presentable objects where $i\leq\lambda^+$. There is $h_0:M_0\to N_0$ in $\ck^\ast$ with $N_0\in\cl$. There is a 
$\nf$-independent square
$$
 \xymatrix@=3pc{
        M_1 \ar@{}\ar[r]^{h_1} & N_1 \\
        M_0 \ar [u]^{m_{01}} \ar [r]_{h_0} &
        N_0 \ar[u]_{n_{01}}
      }
      $$
in $\ck$ and, since $\cl$ is cofinal in $\ck$, we may assume that $N_1$ is in $\cl$.
Since $M_1$ satisfies (iii), $h_1$ is in $\ck^\ast$. We iterate this procedure, proceeding as above at successor stages.  At limit stages, we take colimits, although this requires somewhat more care: for short chains, we must make use of the cofinality of $\cl$ to ensure that the object in the upper right corner is still in $\cl$.  Taking the colimit of the resulting $\lambda^+$-chain, we have $h:M\to N$ in $\ck^{\ast}$ with $N$ in $\cl$ (thanks to the $\lambda^+$-accessibility of $\cl$), which contradicts our initial assumption.
 
\end{proof}

\section{Stable independence in categories of groups and modules}\label{secstab}

That we are able to lift stable independence in the sense of Theorem~\ref{cofinal-thm} yields immediate benefits, namely the proof of stable independence in a host of categories that arise naturally in algebra.  This is a consequence not only of the theorem, but the following recent developments:
\begin{enumerate}
\item One of the essential ideas of \cite{LRVweak} is that continuous weakly stable independent relations are abundant and easily detectable in the algebraic context, typically taking the form of \emph{effective squares}.  We briefly recall some of the necessary terminology, as it will also be required in Section~\ref{sechdim}.
\item In \cite{univ-modules-pp-v2} and \cite{MAFuchs}, a large number of algebraic categories are shown to have precisely the model-theoretic properties required to ensure the existence of a cofinal full subcategory equipped with a stable independence relation.
\end{enumerate}
Taken together, this yields a host of algebraic categories with stable independence.

We begin by recalling a few pieces of necessary terminology from \cite{lrv-cell}, which will allow us to give sufficient conditions for the existence of a $\aleph_0$-continuous weakly stable independence relation.

\begin{defin}\label{defnice} Let $\ck$ be a category, and let $\cm$ be a class of morphisms in $\ck$.
\begin{enumerate}
	\item We say that $\cm$ is \emph{almost nice} if it is satisfies the following conditions:
	\begin{enumerate}
	\item $\cm$ is \emph{normal}: it contains all isomorphisms in $\ck$ and is closed under composition.
	\item $\cm$ is \emph{coherent}: whenever $f$ and $g$ are composable morphisms with $gf\in\cm$ and $g\in\cm$, then $f\in\cm$ as well.
	\item $\cm$ is a \emph{coclan}: the pushout of any two morphisms, at least one of which is in $\cm$, exists, and $\cm$ is closed under pushouts.
	\end{enumerate}
	Incidentally, we say that $\cm$ is \emph{nice} if it is also closed under retracts in $\ck^2$. This means that if $(u,v):g\to f$ and $(r,s):f\to g$ are morphisms in $\ck^2$
	such that $(r,s)(u,v)=\id_g$ then $f\in\cm$ implies that $g\in\cm$.
	\item We say that $\cm$ is \emph{$\lambda$-continuous}, $\lambda$ an infinite regular cardinal, if $\ck$ has $\lambda$-directed colimits, and $\cm$ is closed under $\lambda$-directed colimits in $\ck$.
	\item We say that $\cm$ is $\lambda$-accessible if it is $\lambda$-continuous and both
	$\ck$ and $\ck_\cm$ are $\lambda$-accessible. $\cm$ is accessible if it is $\lambda$-accessible for some $\lambda$.
\end{enumerate}
\end{defin}

\begin{notation}Note that if $\cm$ is normal, we can form a subcategory $\ck_\cm$ of $\ck$ whose objects are those of $\ck$ and whose morphisms are precisely those in $\cm$.\end{notation}

We define a natural candidate for an independence relation on $\ck_\cm$ in the form of \emph{$\cm$-effective squares}, following \cite[2.2,2.3]{lrv-cell}. 

\begin{defin}\label{defmeff}
	Let $\ck$ be a category, and let $\cm$ be a class of morphisms in $\ck$.  An \emph{$\cm$-effective square} is a commutative square of $\ck$-morphisms
	$$\xymatrix{ B\ar[r]^h & D \\
	A\ar[u]^f\ar[r]_g & C\ar[u]_k }$$
	such that the pushout $P$ of $f$ and $g$ exists, and the induced morphism $P\to D$ is in $\cm$.
\end{defin}

We note that, in case $\cm$ consists of the regular monomorphisms in $\ck$, $\cm$-effective squares are precisely the \emph{effective unions} of \cite{effective-unions}.

\begin{fact}\label{factmeffwstab}
	If $\ck$ has pushouts and $\cm$ is almost nice, $\cm$-effective squares form a weakly stable independence relation on $\ck_\cm$ \cite[2.7]{lrv-cell}.  If, moreover, $\cm$ is $\lambda$-continuous, this independence relation is $\lambda$-continuous \cite[2.11]{lrv-cell}.
\end{fact}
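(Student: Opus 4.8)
The plan is to verify each clause of Definition~\ref{defwstab}, and then Definition~\ref{defcts}, directly from the definition of $\cm$-effective square, tracking throughout how pushouts interact with the three closure properties packed into ``almost nice'': normality, coherence, and the coclan property. The key preliminary observation is that, for a span $f\colon A\to B$, $g\colon A\to C$ in $\ck_\cm$, the pushout $P$ of $f$ and $g$ exists (as $\ck$ has pushouts), its legs $B\to P$ and $C\to P$ lie in $\cm$ (by the coclan property, since $f,g\in\cm$), and a completion $(A,B,C,D)$ of the span is $\cm$-effective precisely when the induced morphism $p\colon P\to D$ lies in $\cm$; in particular all four edges of the pushout square lie in $\cm$. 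With this in hand, the four ``soft'' axioms of Definition~\ref{defwstab} go quickly. For invariance (part (1)): the inner and outer squares share the pushout $P$, and by uniqueness of the induced morphism the map attached to the outer square is $m\circ p$, where $m\colon D\to E$ is the connecting morphism; as $E$ is an object of $\ck_\cm$ we have $m\in\cm$, so $p\in\cm$ iff $mp\in\cm$ --- by closure under composition in one direction, by coherence in the other. Symmetry is immediate, since $P$ is symmetric in $f$ and $g$. Existence holds because the pushout square has induced morphism $\id_P\in\cm$. For uniqueness, given two $\cm$-effective completions of a span with induced maps $p\colon P\to D$ and $p'\colon P\to D'$, I would form the pushout $E$ of $p$ and $p'$: its legs $D\to E$ and $D'\to E$ lie in $\cm$ by the coclan property, and since $p$ and $p'$ recover the square edges out of $B$ and $C$ while the composites $P\to D\to E$ and $P\to D'\to E$ agree, the two completions become equal after postcomposition with the legs into $E$, which is exactly the amalgamation asked for in Definition~\ref{defwstab}(1).

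Transitivity is the substantive step. By symmetry it suffices to treat horizontal composition of $\cm$-effective squares $S_1=(A,B,C,D)$ and $S_2=(C,D,E,F)$ glued along the edge $C\to D$; write $p_1\colon P_1\to D\in\cm$ for the morphism induced by $S_1$. I would first form the pushout $Q$ of $C\to P_1$ and $C\to E$; by the pasting lemma, $Q$ is the pushout for the composite span $A\to B$, $A\to E$. Then I would form the pushout $R$ of $p_1$ along the leg $P_1\to Q$; a second application of the pasting lemma --- using that $C\to P_1\to D$ is the shared edge $C\to D$ --- identifies $R$ with the pushout $P_2$ of $S_2$, and hence with the codomain of the morphism $p_2\colon P_2\to F\in\cm$ induced by $S_2$. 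The canonical morphism $Q\to P_2$ is then a pushout of $p_1\in\cm$, so it lies in $\cm$ by the coclan property; therefore $p_2\circ(Q\to P_2)\colon Q\to F$ lies in $\cm$, and a short diagram chase identifies this composite with the morphism induced by the composite square, which is thus $\cm$-effective. Vertical composition follows by transposing the squares and invoking the symmetry already established.

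For $\lambda$-continuity, assume in addition that $\cm$ is $\lambda$-continuous, and let a $\lambda$-directed diagram of $\ck$-morphisms with $\cm$-effective transition squares be given. Fixing one of its objects and passing to the cofinal subdiagram above it, the pushouts witnessing $\cm$-effectivity of the transition squares out of that object form a $\lambda$-directed diagram; since pushouts commute with $\lambda$-directed colimits, its colimit is the pushout witnessing the cocone square out of that object, and the morphism induced by the cocone square is the $\lambda$-directed colimit, taken in $\ck^2$, of the induced morphisms, each of which lies in $\cm$ --- hence it too lies in $\cm$, as $\cm$ is closed under $\lambda$-directed colimits in $\ck$. So the cocone square is $\cm$-effective, which gives $\lambda$-continuity of the relation. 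I expect the main obstacle to be transitivity: organising the two applications of the pasting lemma so that the resulting $\cm$-morphism $Q\to F$ is recognisably the map induced by the composite square. Everything else reduces to routine bookkeeping with the universal property of pushouts together with the three closure conditions; the one place where the restriction from $\ck$ to $\ck_\cm$ is genuinely used is the invariance axiom, where coherence needs the connecting morphism $D\to E$ to lie in $\cm$.
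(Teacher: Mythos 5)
The paper gives no proof of this statement: it is imported as a Fact, with the two claims cited to [2.7] and [2.11] of \cite{lrv-cell}. Your direct verification is correct and is essentially the same argument as in the cited source: existence from the pushout square itself (with legs in $\cm$ by the coclan property and induced map an isomorphism), invariance and coherence of the induced map under postcomposition with an $\cm$-morphism, uniqueness by pushing out the two induced maps $P\to D$, $P\to D'$, transitivity by pasting pushouts so that the induced map of the composite square factors as (a pushout of the first induced map) followed by the second induced map, and $\lambda$-continuity from the commutation of $\lambda$-directed colimits with pushouts together with closure of $\cm$ under such colimits.

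Two small points you gloss over in the continuity step, both handled by the same commuting-colimits device: to see that the colimit cocone squares are morphisms of $(\ck_\cm)_\downarrow$ at all, you need the coprojections $A_{i_0}\to \colim_j A_j$ to lie in $\cm$, which follows since they are $\lambda$-directed colimits in $\ck^2$ of the $\cm$-maps $u_{i_0 j}$; and if ``closed under $\lambda$-directed colimits'' is read to require that the cocone is a colimit cocone in $(\ck_\cm)_\downarrow$, one should also check that the comparison square into any other cocone of $\cm$-effective squares is $\cm$-effective, which again follows because $B+_A A'=\colim_i\bigl(B_i+_{A_i}A'\bigr)$ and the induced map to $B'$ is the colimit of maps in $\cm$. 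Neither point changes the structure of your argument.
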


This will guarantee the existence of $\aleph_0$-continuous, weakly stable independence relations in a number of familiar algebraic categories that have recently been the subject of analyses using the tools of abstract model theory.  The essential fact we require, which allows us to isolate a cofinal, full subcategory equipped with a stable independence relation---and thus apply Theorem~\ref{cofinal-thm}---is fundamentally model-theoretic.  

We recall that in an abstract elementary class $\ck$, the syntactic types familiar from classical model theory are replaced by \emph{Galois} (or \emph{orbital}) \emph{types}: given a model $M\in\ck$, Galois types over $M$ are typically identified with orbits of tuples in a large, strongly homogeneous \emph{monster model} under automorphisms fixing $M$.  A class is said to be \emph{$\lambda$-Galois stable} if there are at most $\lambda$ Galois (1-)types over any $M\in\ck$ of cardinality $\lambda$.  Moreover, Galois types in a class $\ck$ are said to be \emph{$<\aleph_0$-short} if, roughly speaking, the type of any tuple is completely determined by the types of its finite subtuples.  Readers unfamiliar with these properties may wish to treat the following as a black box:  

\begin{lemma}\label{modthlem}
	Let $\ck$ be an \emph{abstract elementary class}.  If 
	\begin{enumerate}
		\item $\ck$ has the amalgamation property,
		\item $\ck$ is Galois-stable, and
		\item types in $\ck$ are $<\aleph_0$-short over models,
	\end{enumerate} 
	then there is a full, cofinal subcategory of $\ck$---consisting of sufficiently saturated models---on which there is a stable independence notion.
\end{lemma}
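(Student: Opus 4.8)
The plan is to pass through the equivalence, established in earlier work on stable independence in AECs, between the model-theoretic notion of stability (in the sense of Galois types) plus shortness and the existence of a stable independence notion on a cofinal subcategory. First I would fix a monster model $\mathfrak{C}$ for $\ck$ (available by amalgamation, joint embedding, and no maximal models — the last two we may assume or arrange by passing to a cofinal subclass), and let $\mu$ be large enough that $\ck$ is Galois-stable in all cardinals $\geq \mu$ of the right cofinality; here I would invoke the standard fact that Galois stability, once it holds in one suitable cardinal, holds on a tail, together with the hypothesis that types are $<\aleph_0$-short, which guarantees that the stability spectrum is well-behaved and that $\mu$-saturated models exist cofinally. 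The subcategory $\cl$ will then be the full subcategory of $\ck$ consisting of $\mu$-saturated (equivalently, $\mu$-universal and $\mu$-homogeneous) models; by Example \ref{cofinal-ex}(2) this is a cofinal full subcategory, since every $M \in \ck$ embeds into a $\mu$-saturated model and saturated models are closed under the finite-cocone condition of Definition \ref{defcofin} by amalgamating the finitely many maps into a common saturated extension.

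Next I would define the independence relation on $\cl$ to be Galois-type nonforking: a commutative square is declared independent if the corresponding amalgam realizes a nonforking (free) extension of the relevant Galois type, where nonforking is defined by the usual local-character/uniqueness recipe — a type over $N$ does not fork over $M \le N$ if it is the unique free extension, free extensions being detected by shortness via finite subtuples. The bulk of the work is verifying the axioms of Definition \ref{defwstab}(2): symmetry of Galois nonforking, transitivity, existence (free extensions exist because we are inside saturated models, which realize all the relevant types), and uniqueness (two free amalgams are isomorphic over the base, which is precisely uniqueness of nonforking extensions and follows from stability plus shortness). Accessibility of the resulting relation — that $\cl_\downarrow$ is $\lambda$-accessible for suitable $\lambda$ — comes from local and finite character of nonforking: finite character is exactly the $<\aleph_0$-shortness hypothesis, and local character (every type does not fork over a small submodel) is a consequence of Galois stability. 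Together these give that the independent squares are closed under sufficiently directed colimits and are generated under such colimits by the small ones, which is the content of $\lambda$-accessibility.

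The main obstacle, I expect, is the careful bookkeeping needed to pin down the cardinal $\mu$ and the accessibility index $\lambda$: one must choose $\mu$ simultaneously so that $\ck$ is Galois-stable above $\mu$, so that $\mu$-saturated models form a cofinal subcategory, and so that nonforking over $\mu$-saturated bases has genuinely good behavior (in particular that the local-character bound is below $\mu$), and then $\lambda$ must dominate $\mu$ and the presentation rank of $\ck$. Verifying uniqueness of nonforking in this generality — without tameness assumed outright, relying only on shortness — is the technically delicate point, but shortness is designed precisely to make the type of a tuple recoverable from its finite restrictions, so uniqueness of the free amalgam reduces to uniqueness on finite tuples, where it follows from the stability-theoretic analysis. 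I would therefore present this step by explicitly citing the AEC-internal development of stable independence from Galois stability and shortness (as in the references the paper leans on for the canonicity theorem, \cite{lrv-cell} and \cite{indep-categ-advances}), so that the proof amounts to checking that hypotheses (1)--(3) supply exactly the inputs those results require, and then reading off that nonforking on the $\mu$-saturated subcategory is weakly stable and accessible, hence a stable independence notion.
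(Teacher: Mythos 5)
Your overall plan has the same shape as the paper's sketch (restrict to sufficiently saturated models, declare a square independent when the amalgam realizes a free extension, use shortness to reduce infinite tuples to finite ones, get accessibility from local/finite character), but it has a genuine gap at the crux: you never supply, or correctly attribute, the machinery that actually produces a nonforking calculus with existence, uniqueness, symmetry, transitivity, and local character over saturated models. You assert that these ``follow from stability plus shortness'' and propose to cite ``the AEC-internal development of stable independence'' in \cite{lrv-cell} and \cite{indep-categ-advances}, but those references do not construct an independence relation from these hypotheses: \cite[A.6]{lrv-cell} is a canonicity (uniqueness-of-the-notion) theorem and \cite{indep-categ-advances} supplies only the categorical framework. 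In particular, claims such as ``local character is a consequence of Galois stability'' are not true in an arbitrary AEC and are exactly what needs proof here. The paper's proof gets these facts by using Galois Morleyization to place the $<\aleph_0$-short class inside homogeneous model theory and then invoking Hyttinen--Shelah's freeness relation \cite[\S 3]{HySh}, which is proved to satisfy the required properties over $a$-saturated models; the translation between that model-theoretic formulation and Definition~\ref{defwstab}/\ref{defstab} is then \cite[8.14]{lrv-cell}. Without identifying some such engine (homogeneous model theory, or an equivalent concrete development), the central step of your argument is an assertion, not a proof.

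A second, smaller issue: the one technical point the paper flags---transferring the existence/extension property from types of finite sequences to types of arbitrary length---is handled there by a compactness argument for homogeneous structures (a complete type is satisfiable iff all its finite restrictions are), assembling the free extension from the free extensions of its finite subtypes. You address uniqueness by reduction to finite tuples, but for existence you say only that ``free extensions exist because we are inside saturated models, which realize all the relevant types''; saturation does not by itself realize a coherent family of free extensions of an infinite tuple, so this step also needs the finite-to-infinite compactness argument (or a citation covering it) rather than an appeal to saturation.
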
 

\begin{proof}[Proof sketch] In essence, finite shortness puts us in the realm of homogeneous model theory, where the desired result is already  known. While we omit the full argument, we hope that the following outline will be sufficient for the interested reader.  In the process, we will make free use of the technique of \emph{Galois Morleyization} introduced in [Vas1]: in an $<\aleph_0$-short AEC one can identify types of finite sequences over the empty set with finitary quantifier-free formulas (formally, by expanding the language).

Following \cite[\S 3]{HySh}, the assumptions of the lemma yield a relation \emph{$p$ is free over $M$}, for $p$ a Galois type over a model $N \le M$, that satisfies all the properties of stable independence provided $M$ and $N$ are sufficiently saturated (in particular, in the terminology of that paper, $M$ and $N$ must be $a$-saturated, a consequence of $\lambda$-saturation in some sufficiently big $\lambda$, see \cite[1.9.4]{HySh}). Note that the properties of independence verified in that paper are the model-theoretic analogues of the category-theoretic definition we discuss here, but the two definitions are equivalent, \cite[8.14]{lrv-cell}. There is the issue, too, that in \cite{HySh}, $p$ is assumed to be the type of a finite sequence---as we are concerned with types of infinite sequences, we must show that the existence/extension property of \cite{HySh} can be transferred to this context. This can be done relatively easily, making use of the compactness theorem for homogeneous model theory: the complete type $p$ of a sequence of arbitrary length is satisfiable just in case its restrictions to finite subsequences are satisfiable (see \cite[1.1]{HySh}, or, more explicitly, \cite[3.8,3.9]{abv}). Given the type $p$ of a sequence of arbitrary length over $M$, and $\bar{x}$ a finite subsequence, the restriction of $p$ to $\bar{x}$ has a free extension over $N$.  Consider the set of all such free extensions, regarded as quantifier-free formulas (via Galois Morleyization, if necessary). The resulting set is complete and, by the extension property of freeness for types of \emph{finite} sequences, all of its restrictions to finite sets of variables are consistent. By construction, the resulting type is the free extension of $p$ over $N$.
\end{proof}

We now obtain stable independence relations on a wide array of algebraic categories using Theorem~\ref{cofinal-thm}, Fact~\ref{factmeffwstab}, and Lemma~\ref{modthlem}---we note that \cite{mazarmnonelem} constructs stable independence relations in many of the same cases, by more concrete means.

As a template for our approach, consider:

\begin{theo}\label{r-mod-pure}
  For any ring with unit $R$, the category of (left) $R$-modules and pure monomorphisms, $\Rmod_{pure}$, has a stable independence notion.
\end{theo}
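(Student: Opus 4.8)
The plan is to deduce this from Theorem~\ref{cofinal-thm} applied to $\ck = \Rmod_{pure}$, so the task reduces to producing the three ingredients required there: all morphisms monic (immediate, since pure monomorphisms are monic), an $\aleph_0$-continuous weakly stable independence notion on $\Rmod_{pure}$, and a stable independence notion on an accessibly embedded cofinal full subcategory $\cl$.

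For the second ingredient I would take $\cm$ to be the class of pure monomorphisms of $\Rmod$ and invoke Fact~\ref{factmeffwstab}. This requires checking that $\cm$ is almost nice and $\aleph_0$-continuous. Normality (isomorphisms are pure, purity is preserved under composition) and coherence (if $gf$ and $g$ are pure, so is $f$) are routine; $\cm$ is a coclan because $\Rmod$ is cocomplete, hence has all pushouts, and pure monomorphisms are stable under pushout; and $\aleph_0$-continuity holds since $\Rmod$ has directed colimits and purity is preserved under them. Fact~\ref{factmeffwstab} then hands us the $\cm$-effective squares as an $\aleph_0$-continuous weakly stable independence notion on $\Rmod_{pure}$.

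For the third ingredient I would verify the hypotheses of Lemma~\ref{modthlem}. The class $\Rmod_{pure}$ is an abstract elementary class---the class of all (left) $R$-modules with ``pure submodule'' as strong substructure, of Löwenheim--Skolem number $|R|+\aleph_0$; it has the amalgamation property, since pushouts of pure monomorphisms along arbitrary morphisms in $\Rmod$ are again pure monomorphisms; it is Galois-stable; and its Galois types are $<\aleph_0$-short over models, a Galois type there being controlled by its pp-type, which is a set of pp-formulas each involving only finitely many free variables. These are exactly the model-theoretic properties established for such categories in \cite{univ-modules-pp-v2} and \cite{MAFuchs}. Lemma~\ref{modthlem} then yields a full cofinal subcategory $\cl$ of $\Rmod_{pure}$, consisting of $\lambda$-saturated modules for a suitable $\lambda$, carrying a stable independence notion; taking $\lambda$ large enough (above the relevant stability and Hanf bounds, with $\lambda=\lambda^{<\lambda}$), $\cl$ is moreover accessibly embedded, e.g. by \cite{rosicky-sat-jsl}. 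With all three ingredients in place, Theorem~\ref{cofinal-thm} delivers a stable independence notion on $\Rmod_{pure}$.

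The verification in the second paragraph is routine. The crux is the third: one must be certain that $\Rmod_{pure}$, viewed as an AEC, genuinely satisfies the model-theoretic hypotheses of Lemma~\ref{modthlem}---Galois-stability and finite shortness of types---and, a touch more delicately, that the cofinal subcategory of saturated models it produces is accessibly embedded rather than merely cofinal. Both points are known (stability of the theory of modules together with pp-quantifier elimination gives the former; the general theory of saturated objects in accessible categories gives the latter), so in practice the real content of the argument is carried by Theorem~\ref{cofinal-thm} and these cited inputs rather than by any new computation.
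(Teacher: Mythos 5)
Your proposal is correct and follows essentially the same route as the paper: the paper's proof likewise cites \cite{univ-modules-pp-v2} to verify the hypotheses of Lemma~\ref{modthlem} (AEC, amalgamation, Galois-stability, $<\aleph_0$-shortness), obtaining a stable independence notion on the cofinal full subcategory of sufficiently saturated models, and then combines Fact~\ref{factmeffwstab} (applied to pure monomorphisms, giving the $\aleph_0$-continuous weakly stable notion of $\cm$-effective squares) with Theorem~\ref{cofinal-thm}. Your additional checks---that pure monomorphisms form an almost nice, $\aleph_0$-continuous class, and that the saturated subcategory is accessibly embedded---are exactly the details the paper leaves implicit, and they are verified correctly.
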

\begin{proof}
  By \cite{univ-modules-pp-v2}, $\Rmod_{pure}$ forms an AEC, has amalgamation, is stable, and types are $(<\aleph_0)$-short over models; that is, it satisfies all the hypotheses of Lemma~\ref{modthlem}.  Thus $\ck$ must have a stable independence relation on its sufficiently saturated models, which form a cofinal, full subcategory. By Theorem \ref{cofinal-thm} and Fact~\ref{factmeffwstab}, then, $\Rmod_{pure}$ has a stable independence relation.
\end{proof}

As an aside, in light of Fact~\ref{thmlrvcell} below (originally appearing as \cite[3.1]{lrv-cell}), it follows that:

\begin{cor}
	Pure monomorphisms are cofibrantly generated (generated from a \emph{set} of morphisms by pushouts, tranfinite composition, and retracts) in $\Rmod$.
\end{cor}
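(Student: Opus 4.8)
The plan is to read the corollary off directly from Theorem~\ref{r-mod-pure} and Fact~\ref{thmlrvcell}. Recall that Fact~\ref{thmlrvcell} (originally \cite[3.1]{lrv-cell}) asserts, for a cocomplete category $\ck$ and a suitable class $\cm$ of monomorphisms, that $\ck_\cm$ admits a stable independence notion precisely when $\cm$ is cofibrantly generated, and that in that case the stable independence notion is the one given by $\cm$-effective squares. I would apply the implication \emph{existence of a stable independence notion on $\ck_\cm$ $\Rightarrow$ cofibrant generation of $\cm$} with $\ck=\Rmod$ and $\cm$ the class of pure monomorphisms, so that $\ck_\cm=\Rmod_{pure}$.

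Concretely, the steps are as follows. First I would record that $\Rmod$ is locally finitely presentable, hence cocomplete, so in particular all pushouts exist in $\Rmod$. Next I would verify that $\cm$ meets the standing hypotheses of Fact~\ref{thmlrvcell}: every pure monomorphism is a monomorphism; $\cm$ contains all isomorphisms and is closed under composition; $\cm$ is coherent, i.e. $gf,g\in\cm$ forces $f\in\cm$; and $\cm$ is closed under pushouts and under retracts in $\Rmod^2$—all classical facts about purity in module categories (closure under pushout, for example, because tensoring with a right module preserves the pushout square and monomorphisms in an abelian category survive pushout). I would also note that $\Rmod_{pure}$ is an accessible category, accessibly embedded in $\Rmod$ and closed under directed colimits, so that $\cm$ is accessible in the sense of Definition~\ref{defnice}. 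With these checks done, Theorem~\ref{r-mod-pure} supplies a stable independence notion on $\Rmod_{pure}$, and Fact~\ref{thmlrvcell} then forces $\cm$ to be cofibrantly generated, which is exactly the assertion of the corollary.

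I do not expect a substantial obstacle: the genuine content is already packaged in Theorem~\ref{r-mod-pure} (hence in Lemma~\ref{modthlem} and \cite{univ-modules-pp-v2}) and in Fact~\ref{thmlrvcell}. The only real care needed is to confirm that the hypotheses of Fact~\ref{thmlrvcell} are matched exactly by the pair $(\Rmod,\cm)$—in particular the ``niceness'' of the class of pure monomorphisms and the accessibility of $\Rmod_{pure}$—and this is routine bookkeeping. I would add a remark that cofibrant generation of pure monomorphisms is also proved, by entirely different and more hands-on methods, in \cite{lprv-purecofgen-v3}; the interest of the present derivation is to show how readily it falls out of the stable-independence machinery once Theorem~\ref{cofinal-thm} is available.
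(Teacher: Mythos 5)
Your proposal is exactly the paper's argument: the corollary is deduced by feeding the stable independence notion on $\Rmod_{pure}$ from Theorem~\ref{r-mod-pure} into the equivalence of Fact~\ref{thmlrvcell} (with $\ck=\Rmod$ and $\cm$ the pure monomorphisms), the paper simply leaving the routine verification that $\cm$ is nice, accessible, and $\aleph_0$-continuous implicit. Your additional bookkeeping of those hypotheses is correct and does not change the route.
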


Note that this is a special case of \cite[3.13]{lprv-purecofgen-v3}, which holds not just for $R$-modules but arbitrary locally finitely presentable additive categories.  Of greater interest are the other applications of this style of argument.  In particular,

\begin{theo}\label{thmmodcats}
	Let $R$ be an integral domain. The following categories of modules have a stable independence relation:
	\begin{enumerate}
		\item Torsion $R$-modules with pure monomorphisms.
		\item $R$-divisible modules with pure monomorphisms (recall that a module $M$ is $R$-divisible if for any nonzero $m\in M$ and nonzero $r\in R$, there is $n\in M$ with $rn=m$).
	\end{enumerate}
\end{theo}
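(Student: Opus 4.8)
The plan is to mimic the proof of Theorem~\ref{r-mod-pure} in both cases, invoking Theorem~\ref{cofinal-thm}. Fix one of the two classes, and let $\ck$ denote the category of torsion $R$-modules (respectively, $R$-divisible modules) together with \emph{all} $R$-module homomorphisms, and $\cm$ the class of pure monomorphisms of $\ck$, so that $\ck_\cm$ is the category in the statement. Hypothesis (1) of Theorem~\ref{cofinal-thm}, applied with $\ck_\cm$ in place of ``$\ck$'', is automatic since pure monomorphisms are monomorphisms. It therefore remains to produce, on $\ck_\cm$, an $\aleph_0$-continuous weakly stable independence notion (hypothesis (2)) and a cofinal full subcategory bearing a stable one (hypothesis (3)), and to note that $\ck_\cm$ is itself accessible.

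For hypothesis (2) I would check that $\cm$ is almost nice and $\aleph_0$-continuous in $\ck$ and quote Fact~\ref{factmeffwstab}. Normality and coherence of pure monomorphisms are formal consequences of the tensor characterization of purity and hold in any module category, hence also in the full subcategory $\ck$. The coclan condition and $\aleph_0$-continuity are the only clauses sensitive to the choice of $\ck$: here the point is that, $R$ being an integral domain, both the torsion modules and the $R$-divisible modules are closed in $\Rmod$ under quotients, arbitrary direct sums, and directed colimits, so $\ck$ is closed in $\Rmod$ under pushouts and directed colimits, and these colimits---computed in $\Rmod$---are already the colimits of $\ck$. Since purity of a morphism is intrinsic, the facts that pure monomorphisms in $\Rmod$ are closed under pushout and under directed colimits pass verbatim to $\ck$. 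Thus $\cm$ is almost nice and $\aleph_0$-continuous, and Fact~\ref{factmeffwstab} supplies the $\aleph_0$-continuous weakly stable independence notion given by $\cm$-effective squares.

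For hypothesis (3) I would cite \cite{MAFuchs}: using in an essential way that $R$ is an integral domain, it shows that each of the classes of torsion $R$-modules and of $R$-divisible modules, equipped with pure embeddings, is an abstract elementary class with the amalgamation property, is Galois-stable, and has $(<\aleph_0)$-short types over models---in particular $\ck_\cm$ is an accessible category. Lemma~\ref{modthlem} then produces a stable independence notion on a cofinal, accessibly embedded full subcategory of $\ck_\cm$, namely the subcategory of its sufficiently saturated models (cofinal by Example~\ref{cofinal-ex}(2)). With all three hypotheses of Theorem~\ref{cofinal-thm} verified, we conclude that $\ck_\cm$ carries a stable independence notion, which is the assertion of the theorem.

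The substantive content of the proof is thus packaged into two black boxes---the model-theoretic input of \cite{MAFuchs} and the lifting theorem, Theorem~\ref{cofinal-thm}---so the main obstacle is really bookkeeping: confirming that the ambient categories $\ck$ are closed under the relevant colimits in $\Rmod$, and, for the divisible case, noting the mild extra fact that a pure submodule of an $R$-divisible module is again $R$-divisible (needed so that $\ck_\cm$ is genuinely closed under the chains appearing in the AEC axioms and in Fact~\ref{factmeffwstab}). These checks are routine and, in any event, subsumed by \cite{MAFuchs}.
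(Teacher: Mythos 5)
Your proposal is correct and follows essentially the same route as the paper: the paper's proof simply cites \cite[4.2(4),(5)]{MAFuchs} and \cite[4.8(2)]{MAFuchs} to verify the hypotheses of Lemma~\ref{modthlem} and then runs the template of Theorem~\ref{r-mod-pure}, i.e.\ Fact~\ref{factmeffwstab} for the $\aleph_0$-continuous weakly stable notion plus Theorem~\ref{cofinal-thm} for the lift. Your additional checks (closure of torsion and divisible modules under pushouts and directed colimits in $\Rmod$, so that pure monomorphisms remain almost nice and $\aleph_0$-continuous there) are exactly the routine verifications the paper leaves implicit in its ``similarly''.
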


\begin{proof}
	\begin{enumerate}
	\item The category of torsion $R$-modules and pure monomorphisms satisfies the condition of Lemma~\ref{modthlem}, by \cite[4.2(4)]{MAFuchs} and \cite[4.8(2)]{MAFuchs}	
	\item Similarly to (1), using \cite[4.2(5)]{MAFuchs} in place of \cite[4.2(4)]{MAFuchs}.
	\end{enumerate}
\end{proof}

We obtain stable independence relations on an assortment of familiar categories of groups, as well, again taking advantage of recent model-theoretic results---again, \cite{mazarmnonelem} actually obtains similar results, by different means.

\begin{theo}\label{thmgpcats}
	The following categories of groups all have a stable independence relation.  
	\begin{enumerate}
		\item Abelian groups with monomophisms (respectively, pure monomorphisms).
		\item Torsion-free abelian groups with pure monomorphisms.  Similarly, reduced torsion-free abelian groups with pure monomorphisms.
		\item Abelian $p$-groups with monomorphisms (respectively, pure monomorphisms), $p$ any prime.
		\item Torsion abelian groups with monomorphisms (respectively, pure monomorphisms).
		\item Divisible abelian groups with monomorphisms.
	\end{enumerate}
\end{theo}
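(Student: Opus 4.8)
The plan is to treat each of these categories the same way we treated $\Rmod_{pure}$ in Theorem~\ref{r-mod-pure} and the module categories in Theorem~\ref{thmmodcats}: verify the three hypotheses of Lemma~\ref{modthlem} (each category is an AEC with amalgamation, is Galois-stable, and has types that are $<\aleph_0$-short over models), thereby obtaining a cofinal full subcategory of sufficiently saturated models carrying a stable independence notion; then combine Fact~\ref{factmeffwstab} (to supply an $\aleph_0$-continuous weakly stable independence notion on the whole category, via $\cm$-effective squares, once we know the relevant class $\cm$ of monomorphisms is almost nice and $\aleph_0$-continuous) with Theorem~\ref{cofinal-thm} to lift stability to the category itself. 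Note that hypothesis (1) of Theorem~\ref{cofinal-thm}---all morphisms monos---is immediate in every case, since the morphisms are by definition (pure) monomorphisms. Hypothesis (2) of Theorem~\ref{cofinal-thm} follows from Fact~\ref{factmeffwstab}: abelian groups, abelian $p$-groups, torsion abelian groups, (reduced) torsion-free abelian groups, and divisible abelian groups are all closed under the relevant colimits, and both ordinary monomorphisms and pure monomorphisms form almost nice, $\aleph_0$-continuous classes in each (pushouts along monos exist and the classes are closed under pushout and directed colimit; coherence is standard). So the real content is hypothesis (3) of Theorem~\ref{cofinal-thm}, i.e. hypothesis (2) and (3) of Lemma~\ref{modthlem}.

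Concretely, I would dispatch each item by citation to the recent literature that establishes exactly these model-theoretic properties. For (1), abelian groups with monomorphisms (resp. pure monomorphisms) form an AEC with amalgamation; Galois-stability and finite shortness over models are available from \cite{univ-modules-pp-v2} (taking $R=\mathbb{Z}$ for the pure case) and from \cite{MAFuchs} for the case of arbitrary monomorphisms. For (2), torsion-free abelian groups and reduced torsion-free abelian groups with pure monomorphisms: here I would invoke \cite[4.2]{MAFuchs} together with the corresponding stability and shortness results \cite[4.8]{MAFuchs}; one must check that these classes are closed under directed colimits and pure subgroups and have amalgamation, which is classical. For (3), abelian $p$-groups: again \cite{MAFuchs} (and \cite{univ-modules-pp-v2} for the pure case) gives amalgamation, stability, and shortness; alternatively one can view $p$-groups as $\mathbb{Z}_{(p)}$-modules or use the torsion-module results. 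Item (4), torsion abelian groups, is literally the $R=\mathbb{Z}$ instance of Theorem~\ref{thmmodcats}(1), so nothing new is needed beyond observing $\mathbb{Z}$ is an integral domain; the version with arbitrary monomorphisms needs the analogous citation from \cite{MAFuchs} for monos rather than pure monos. Item (5), divisible abelian groups with monomorphisms: divisibility is preserved under directed colimits and quotients, amalgamation holds (divisible groups are injective, so this is easy), and stability plus shortness come from \cite{MAFuchs}; note the restriction to plain monomorphisms here, since---as in Theorem~\ref{thmmodcats}(2)---pure embeddings between divisible groups are already splittings and the interesting category is the one with all monos.

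The main obstacle is not any single deep step but rather the bookkeeping: for each of the roughly ten (category, morphism-class) pairs one must confirm (a) that it genuinely forms an AEC closed under the directed colimits needed for $\aleph_0$-continuity, (b) that the relevant class of monomorphisms is almost nice and $\aleph_0$-continuous so that Fact~\ref{factmeffwstab} applies, and (c) that the precise Galois-stability and $<\aleph_0$-shortness statements are on record in \cite{univ-modules-pp-v2} or \cite{MAFuchs} for that exact pair. The subtle points are the ones flagged above: for torsion-free, reduced torsion-free, and divisible groups one must pick the right morphism class (pure monos in the torsion-free cases, plain monos for divisible) for amalgamation and the model-theoretic hypotheses to hold, since the "wrong" choice either fails amalgamation (as with $\aleph_1$-free groups, noted in the introduction) or trivializes. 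Once the correct pairing is fixed, each case is a one-line appeal to Lemma~\ref{modthlem}, Fact~\ref{factmeffwstab}, and Theorem~\ref{cofinal-thm}, exactly mirroring the proofs of Theorems~\ref{r-mod-pure} and~\ref{thmmodcats}.
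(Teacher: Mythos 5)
Your proposal matches the paper's proof: the paper likewise handles each case by citing the model-theoretic literature (mainly \cite{MAFuchs}, \cite{univ-modules-pp-v2}, plus \cite{bcg}, \cite{mazsup}, \cite{shuniv} for items (1) and (2)) to verify the hypotheses of Lemma~\ref{modthlem}, then applies Fact~\ref{factmeffwstab} and Theorem~\ref{cofinal-thm} exactly as in Theorems~\ref{r-mod-pure} and~\ref{thmmodcats}. Apart from minor differences in which specific reference is invoked for each (category, morphism-class) pair, the argument is the same.
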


\begin{proof} As in the proof of Theorem~\ref{thmmodcats}, we satisfy ourselves with indicating the model-theoretic sources that ensure the category satisfies the hypotheses of Lemma~\ref{modthlem}.
	\begin{enumerate}
		\item \cite{bcg} and \cite[3.12]{mazsup}, and \cite[3.16]{univ-modules-pp-v2} (respectively).
		\item The reduced case is \cite[1.2(3)]{shuniv}; general torsion free groups are addressed in, e.g. \cite[3.14]{univ-modules-pp-v2}.
		\item By \cite[4.8(3)]{MAFuchs} and \cite[3.5]{MAFuchs}.
		\item By \cite[4.8(3)]{MAFuchs} and \cite[4.8(1)]{MAFuchs}, respectively.
		\item By \cite[4.8(3)]{MAFuchs}.
	\end{enumerate}
\end{proof}

We expect that considerably more applications of Theorem~\ref{cofinal-thm} of this form are within easy reach: the forthcoming paper \cite{mazarmnonelem}, for example, provides clear avenues for future work along these lines.   

\section{Higher-dimensional independence}\label{sechdim}

We turn now to a different, and perhaps more natural question: we know that if we have a stable independence relation on a category $\ck$, we obtain a well behaved subcategory $\ck_\downarrow$ of the category of morphisms $\ck^2$ consisting of the independent squares.  Is it the case, too, that there is a stable independence notion on $\ck_\downarrow$---consisting now of commutative \emph{cubes} in $\ck$---and under what conditions?  Is there, in turn, a stable independence relation on these cubes?  

We wish to examine, in short, the existence and behavior of \emph{higher-dimensional} stable independence relations.

It should be noted that this is not an exercise in abstraction: higher-dimensional independence relations have played a significant role in recent advances in model theory.  The idea, introduced by Shelah, is vital in his analysis of the classification theory of $L_{\omega_1,\omega}$ in \cite{she83} and \cite{she83b}, and in his proof of the first-order Main Gap, cf. \cite[Ch. XII]{shelahfobook}.  More recently, these notions have made crucial appearances in a number of categoricity transfer arguments, most notably in \cite{zilbexcellent}, for quasiminimal pregeometry classes, and \cite{multidim-v2}, for abstract elementary classes with amalgamation, assuming weak GCH.  Of particular interest are \emph{excellent} classes, which possess independence notions in all finite dimensions.

To be precise, we propose the following notions of $n$-dimensional stable independence and excellence, adapted to our context---these should specialize to the standard ones in the model-theoretic examples mentioned above.

\begin{defin}\label{defhdimind}
	Let $\ck$ be a category.  For $n\geq 1$, we define an \emph{$n$-dimensional stable independence notion on $\ck$}, $\Gamma$, and its induced category, $\ck^\Gamma$, proceeding by induction on $n$:
	\begin{itemize}
		\item We say that $\Gamma$ is a $1$-dimensional stable independence notion on $\ck$ just in case it is the collection of all morphisms in $\ck$.  In this case, we define $\ck^\Gamma=\ck$.
		\item An $(n+1)$-dimensional stable independence notion on $\ck$ consists of a pair $(\Gamma_n,\Gamma)$, where 
		\begin{itemize}
			\item $\Gamma_n$ is an $n$-dimensional stable independence notion on $\ck$, and
			\item $\Gamma$ is a stable independence notion on $\ck^{\Gamma_n}$, in the sense of Definition~\ref{defstab}(3).
		\end{itemize}
		\item Given an $(n+1)$-dimensional stable independence notion $\Gamma_{n+1}=(\Gamma_n,\Gamma)$ on $\ck$, we define $\ck^{\Gamma_{n+1}}$ to be the category $(\ck^{\Gamma_n})^\Gamma$, whose objects are morphisms of $\ck^{\Gamma_n}$ and whose morphisms are the $\Gamma$-independent squares.  Note that this is precisely $(\ck^{\Gamma_n})_\downarrow$ with $\nf=\Gamma$.
	\end{itemize}
\end{defin}

Note that the stable independence notions considered in Sections~\ref{seclift} and \ref{secstab} are precisely $2$-dimensional independence notions---in the sense above---on the appropriate categories.  The best case scenario is the following:

\begin{defin}
	We say that a category $\ck$ is \emph{excellent} if for all $n\geq 1$, $\ck$ has an $n$-dimensional stable independence notion $\Gamma_n$ so that $\ck^{\Gamma_n}$ has directed colimits.
\end{defin}

As noted in the introduction, excellence is far from the norm in the model-theoretic context, as the existence property will typically fail for sufficiently high-dimensional relations: \cite{gkkamalgfunct} and \cite{gkktypeamalg} develop a comprehensive theory of such obstructions.  Here we restrict ourselves to the setting of \cite{lrv-cell}; that is, with locally presentable ambient category $\ck$ and well behaved class of morphisms $\cm$.  With this added structure, the obstructions disappear: if there is a stable independence relation on $\ck_\cm$, it is excellent.  

All of the difficulty lies in the inductive step: given an $n$-dimensional stable independence notion $\Gamma$ on $\ck$, how do we construct a stable independence notion $\Gamma'$ so that $(\ck^\Gamma)^{\Gamma'}$ has directed colimits?

In fact, we consider a simpler---but entirely equivalent---problem, whose solution, Proposition~\ref{thmindstep} below, should be of independent interest.  In particular, we take advantage of the fact that, in this framework, existence of stable independence notions of a particular dimension is equivalent to cofibrant generation of a suitable family of morphisms, via the central result of \cite{lrv-cell}.  For the sake of completeness, we include that result here, phrased in terms better suited to the current context:

\begin{fact}\label{thmlrvcell} (\cite[3.1]{lrv-cell})
Let $\ck$ be a locally presentable category, and let $\cm$ be a nice, accessible, and $\aleph_0$-continuous class of morphisms in $\ck$.  The following are equivalent:
\begin{enumerate}
	\item $\ck_\cm$ has a stable independence notion.
	\item $\cm$-effective squares form a stable independence notion in $\ck_\cm$.
	\item $\cm$ is cofibrantly generated in $\ck$.
\end{enumerate}
\end{fact}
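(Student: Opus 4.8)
The plan is to prove $(2)\Rightarrow(1)\Rightarrow(2)$ together with the equivalence $(2)\Leftrightarrow(3)$; since $(2)\Rightarrow(1)$ merely records that a designated class of squares qualifies as a stable independence notion, the substance lies in $(1)\Rightarrow(2)$ and in $(2)\Leftrightarrow(3)$. For $(1)\Rightarrow(2)$, I would first note that a locally presentable $\ck$ is cocomplete, hence has all pushouts, and that a nice class of morphisms is a fortiori almost nice; so Fact~\ref{factmeffwstab} applies and the $\cm$-effective squares already form an $\aleph_0$-continuous \emph{weakly} stable independence notion on $\ck_\cm$. Now invoke the canonicity theorem \cite[A.6]{lrv-cell}: if $\ck_\cm$ admits any stable independence notion --- as it does by $(1)$ --- then every weakly stable independence notion on it is in fact stable (and coincides with the given one). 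Applied to the $\cm$-effective squares, this gives $(2)$.

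For $(2)\Rightarrow(3)$, fix a regular cardinal $\lambda$ large enough that $\ck$, $\ck_\cm$, and the category $(\ck_\cm)_\downarrow$ of $\cm$-effective squares are all $\lambda$-accessible, that $\cm$ is $\lambda$-continuous, and that the $\lambda$-presentable objects of $(\ck_\cm)_\downarrow$ have $\lambda$-presentable domain and codomain --- this is possible since $(2)$ makes $(\ck_\cm)_\downarrow$ accessible and $\cm$ is accessible by hypothesis. Let $\mathcal{X}$ be the (essentially small) class of members of $\cm$ whose domain and codomain are $\lambda$-presentable. The inclusion $\operatorname{cof}(\mathcal{X})\subseteq\cm$ is immediate from $\cm$ being nice and $\aleph_0$-continuous, hence closed under pushouts, transfinite compositions, and retracts. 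For the converse, given $f\in\cm$, write $f$ as a $\lambda$-directed colimit in $(\ck_\cm)_\downarrow$ of members of $\mathcal{X}$; the transition maps being $\cm$-effective squares, a routine cellular-decomposition argument --- refine to a smooth chain and, at each successor step, factor through the pushout, whose comparison map to the new codomain lies in $\cm$ by effectivity, using coherence of $\cm$ to absorb the leftover map --- exhibits $f$ as a retract of a transfinite composite of pushouts of members of $\mathcal{X}$. Hence $\cm\subseteq\operatorname{cof}(\mathcal{X})$, and $(3)$ follows.

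For $(3)\Rightarrow(2)$, weak stability and $\aleph_0$-continuity of the $\cm$-effective squares are again supplied by Fact~\ref{factmeffwstab}; the one remaining point is that $(\ck_\cm)_\downarrow$ is \emph{accessible}, not merely closed under directed colimits. Writing $\cm=\operatorname{cof}(\mathcal{X})$ for a \emph{set} $\mathcal{X}$, I would present $(\ck_\cm)_\downarrow$ as a cellular category determined by a small amount of data --- namely the $\cm$-effective squares between $\lambda$-presentable objects, where $\lambda$ is chosen so that all of $\mathcal{X}$ is $\lambda$-presentable --- and then appeal to the principle that a cellular category cofibrantly generated by a set of morphisms is accessible, which is in essence the accessibility input behind \cite{mr} (compare also \cite{beke-rosicky}). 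It then remains only to check that this generating subcategory detects $\lambda$-presentability in $(\ck_\cm)_\downarrow$, a routine verification once $\lambda$ is taken large enough.

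I expect the main obstacle to be the dictionary, in both directions, between ``$\cm$-effective square'' and ``cell structure built from $\mathcal{X}$''. In $(2)\Rightarrow(3)$ the delicate part is converting an abstract $\lambda$-directed presentation of $f\in\cm$ into an honest transfinite tower of pushouts of generators, where the bookkeeping with coherence and left-cancellability of $\cm$ must be handled with care; in $(3)\Rightarrow(2)$ it is upgrading closure of $(\ck_\cm)_\downarrow$ under directed colimits to full accessibility. Both are manifestations of the same heuristic --- ``generated by a set'' ought to coincide with ``accessible'' --- but making this precise in the presence of the pushout/effectivity constraint is exactly where the real work of \cite{lrv-cell} is concentrated.
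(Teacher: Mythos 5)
First, a point of order: the paper does not prove this statement at all --- it is imported as a black-box Fact, cited to \cite[3.1]{lrv-cell} --- so there is no in-paper proof to compare against; what you have written is an attempted reconstruction of the external theorem. Within that reconstruction, the formal parts are fine and do match the known route: $(2)\Rightarrow(1)$ is trivial, and $(1)\Rightarrow(2)$ via Fact~\ref{factmeffwstab} (local presentability gives pushouts, nice implies almost nice) together with the canonicity theorem \cite[A.6]{lrv-cell} (applicable since $\aleph_0$-continuity of $\cm$ gives $\ck_\cm$ directed colimits, hence directed bounds) is exactly the right argument.

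The gap is that the equivalence $(2)\Leftrightarrow(3)$, which is the entire content of the theorem, is not actually proved. In $(2)\Rightarrow(3)$, the sentence ``factor through the pushout \ldots using coherence of $\cm$ to absorb the leftover map'' does not describe a valid step: the comparison map from the pushout to the new codomain is an arbitrary $\cm$-morphism, and coherence cannot ``absorb'' it --- one must iterate transfinitely and prove exhaustion in the colimit. The actual mechanism is to reorganize a $\lambda$-directed presentation $f=\colim_\alpha f_\alpha$ in $\ck_\downarrow$ into a tower $D_\alpha = A\cup_{A_\alpha}B_\alpha$ (pushout of $f_\alpha\colon A_\alpha\to B_\alpha$ along $A_\alpha\to A$), observe that each link $D_\alpha\to D_{\alpha+1}$ is a pushout of the relative map $A_{\alpha+1}\cup_{A_\alpha}B_\alpha\to B_{\alpha+1}$, which lies in $\cm$ (between $\lambda$-presentables) precisely because the link square is $\cm$-effective, and then handle the fact that a $\lambda$-directed poset need not contain a cofinal chain --- this is where good (``fat'') colimits in the sense of \cite{fat-small-obj}/\cite{mr} enter. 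None of this appears in your sketch. Likewise, in $(3)\Rightarrow(2)$ you justify accessibility of $(\ck_\cm)_\downarrow$ by appeal to ``the principle that a cellular category cofibrantly generated by a set of morphisms is accessible''; but accessibility of the effective-square category is exactly the statement being proved, so as written this direction is circular. The genuine input here is again the fat small object argument: every $\operatorname{cof}(\mathcal{X})$-map is a $\lambda$-good colimit, in the category of $\cm$-effective squares, of $\lambda$-presentable $\cm$-maps, and checking that the links of such a good colimit are effective squares uses the coclan/coherence structure of $\cm$. So your proposal correctly identifies the architecture but defers the two hard directions to ``routine'' arguments that are, respectively, invalid as stated and question-begging.
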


Note that we have unpacked much of the terminology used in \cite[3.1]{lrv-cell}, for the benefit of the reader.  Recall that a class of morphisms $\cm$ is \emph{cofibrantly generated} if it can be generated from a set---as opposed to a proper class---of morphisms by pushouts, transfinite compositions, and retracts.

\begin{propo}\label{thmindstep}
Let $\ck$ be locally presentable and $\cm$ be nice and $\aleph_0$-continuous in $\ck$. Let $\cm!$ consist of $\cm$-effective morphisms in $\ck^2$. Then $\cm!$ is nice and 
$\aleph_0$-continuous in $\ck^2$.
\end{propo}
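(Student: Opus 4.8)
The plan is to reduce every clause of ``nice'' and of ``$\aleph_0$-continuous'' to a single bookkeeping device. For a morphism $\phi=(\phi_0,\phi_1)\colon(u\colon U_0\to U_1)\to(v\colon V_0\to V_1)$ of $\ck^2$, write $P(\phi)$ for the pushout of $u$ and $\phi_0$ and $c(\phi)\colon P(\phi)\to V_1$ for the induced ``pushout-corner'' map; by Definition~\ref{defmeff}, $\phi\in\cm!$ exactly when $c(\phi)\in\cm$, while the components $\phi_0,\phi_1$ lie in $\cm$ (the morphisms of $\ck^2$ in question being morphisms of $\ck_\cm$). Since $\ck$ is locally presentable, so is $\ck^2$, and both pushouts and directed colimits in $\ck^2$ are computed levelwise. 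Thus the whole proof amounts to tracking $c(-)$ and the components under composition, pushout, retract and directed colimit, using the pasting law for pushouts in $\ck$ together with $\cm$ being normal, coherent, a coclan, retract-closed and $\aleph_0$-continuous.

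The backbone is the behaviour of $c(-)$ under composition. Given $\phi\colon u\to v$ and $\psi\colon v\to w$ (with $w\colon W_0\to W_1$), two applications of the pasting law identify $P(\psi\phi)$ with $P(\phi)\sqcup_{V_0}W_0$ --- so the coprojection $j\colon P(\phi)\to P(\psi\phi)$ is a pushout of $\psi_0$ --- and identify $P(\psi)$ with $V_1\sqcup_{P(\phi)}P(\psi\phi)$, fitting into a pushout square with vertices $P(\phi),V_1,P(\psi\phi),P(\psi)$ and edges $c(\phi)$, $j$, $q\colon P(\psi\phi)\to P(\psi)$ (a pushout of $c(\phi)$) and $j'\colon V_1\to P(\psi)$ (a pushout of $j$); moreover $c(\psi\phi)=c(\psi)\circ q$. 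Closure under composition is then immediate: $q\in\cm$ since $\cm$ is a coclan, so $c(\psi\phi)=c(\psi)q\in\cm$, and the components compose inside $\cm$. That $\cm!$ contains the isomorphisms of $\ck^2$ is clear, as $c$ sends isomorphisms to isomorphisms, which gives normality. For $\aleph_0$-continuity, $P(-)$ and $c(-)$ are given by finite colimits and hence commute with directed colimits, so a directed colimit of morphisms of $\cm!$ has pushout-corner map a directed colimit in $\ck^2$ of morphisms of $\cm$, which lies in $\cm$; the components behave identically. Retract-closure is the same observation with ``retract'' in place of ``directed colimit'', using functoriality of $c(-)$ and retract-closure of $\cm$. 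The coclan clause is one more pasting diagram: a pushout $\phi'$ of $\phi\in\cm!$ in $\ck^2$ (which exists, being computed levelwise) has $c(\phi')$ a pushout of $c(\phi)$, hence in $\cm$, and components that are pushouts of those of $\phi$.

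The one genuinely delicate clause --- and the step I expect to be the main obstacle --- is \emph{coherence}: $\psi\phi,\psi\in\cm!$ must force $\phi\in\cm!$. From $c(\psi\phi)=c(\psi)\circ q$ with $c(\psi\phi),c(\psi)\in\cm$, coherence of $\cm$ gives $q\in\cm$; but $q$ is merely a \emph{pushout} of $c(\phi)$, and membership in a nice class is not reflected along pushouts, so this does not yet finish the argument --- indeed it cannot, without using that the square's edges lie in $\cm$. The resolution is to exploit the remaining coprojection of that pushout square: $j\colon P(\phi)\to P(\psi\phi)$ is a pushout of $\psi_0\in\cm$, so $j\in\cm$, whence its pushout $j'\colon V_1\to P(\psi)$ lies in $\cm$ as well; since $j'\circ c(\phi)=q\circ j\in\cm$, coherence of $\cm$ applied to the composite $j'\circ c(\phi)$ with $j'\in\cm$ yields $c(\phi)\in\cm$. (That $\phi_0,\phi_1\in\cm$ is itself an immediate instance of coherence of $\cm$.) The real content of the proposition is thus this one interplay --- promoting $\psi_0\in\cm$ to $j'\in\cm$ via the coclan property, and only then invoking coherence of $\cm$ --- with all the remaining checks being routine diagram-pasting.
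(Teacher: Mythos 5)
Your write-up is correct in substance and, unlike the paper's, essentially self-contained: where the paper disposes of normality, coherence, retracts and $\aleph_0$-continuity in a line or by citing \cite[2.7, 2.10, 2.11]{lrv-cell}, you reprove these clauses through one uniform device, the pasting identification $c(\psi\phi)=c(\psi)\circ q$ with $q$ a pushout of $c(\phi)$ and $j'$ a pushout of $j$, itself a pushout of $\psi_0$. Your coherence argument---first getting $q\in\cm$ by coherence of $\cm$, then promoting $\psi_0\in\cm$ to $j\in\cm$ and $j'\in\cm$ via the coclan property, and only then cancelling $c(\phi)$ from $j'c(\phi)=qj$---is exactly the mechanism underlying the cited lemma, and your proof makes visible where the hypothesis $\psi_0\in\cm$ enters, which the paper's citation hides. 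The one imbalance is that the clause you compress to ``one more pasting diagram,'' closure of $\cm!$ under pushouts, is the only clause the paper proves in detail: its entire displayed computation is a universal-property verification of precisely your assertion that $c(\phi')$ is a pushout of $c(\phi)$ along the induced map $P\to P'$. The assertion is true and your pasting route does deliver it (in the paper's notation, $P'\cong P\sqcup_C C'$ and then $D'\cong D\sqcup_P P'$), but as written you have asserted, rather than proved, the one statement carrying the computational weight; write out those two pastings, or the chase the paper gives.

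Second, your parenthetical assumption that the components of the squares lie in $\cm$ is not cosmetic and should be made explicit, because without it the coherence clause is simply false. Read literally, with $\cm$-effectiveness as in Definition~\ref{defmeff} and no condition on the edges, take $\ck=\Set$ and $\cm$ the monomorphisms (nice and $\aleph_0$-continuous): let $u\colon\emptyset\to\emptyset$, $v\colon\{y,y'\}\to\{\ast\}$, $w\colon\{z\}\to\{1\}$, let $\phi\colon u\to v$ have the (empty) inclusions as components, and let $\psi\colon v\to w$ have components $\psi_0$ collapsing $y,y'$ to $z$ and $\psi_1(\ast)=1$. Then $c(\psi\phi)=w$ and $c(\psi)$ (a map out of a singleton) are monos, so $\psi\phi,\psi\in\cm!$, while $c(\phi)=v$ is not, so $\phi\notin\cm!$. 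Thus the proposition must be read, as you implicitly do and as \cite[2.10]{lrv-cell} does, with $\cm!$ consisting of $\cm$-effective squares whose edges are $\cm$-morphisms; under that reading your argument (including the easy observation that $\phi_0,\phi_1\in\cm$ by coherence of $\cm$) is complete and is the right proof.
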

\begin{proof} 
\noindent(a) $\cm!$ is normal: Isomorphisms in $\ck^2$ are commutative squares whose horizontal arrows are isomorphisms. Such squares are pushouts, hence $\cm$-effective. The composition of two $\cm$-effective morphisms is $\cm$-effective (see \cite[2.7]{lrv-cell}).

\noindent
(b) $\cm!$ is coherent: see \cite[2.10]{lrv-cell}.

\noindent
(c) $\cm!$ is a coclan: We have to show that pushouts of $\cm$-effective squares are $\cm$-effective. Let
$$
 \xymatrix@=3pc{
        C \ar@{}\ar[r]^{m_2} & D \\
        A \ar [u]^{m_1} \ar [r]_{m_0} &
        B \ar[u]_{m_3}
      }
$$
be an $\cm$-effective square considered as a morphism $(m_0,m_2):m_1\to m_3$ in $\ck^2$.
Let
$$
 \xymatrix@=3pc{
        C \ar@{}\ar[r]^{c} & C' \\
        A \ar [u]^{m_1} \ar [r]_{a} &
        A' \ar[u]_{m'_1}
      }
$$
be a commutative square considered as a morphism $(a,c):m_1\to m'_1$. Consider the pushout
$$
 \xymatrix@=3pc{
        m_3 \ar@{}\ar[r]^{(b,d)} & m_3' \\
        m_1 \ar [u]^{(m_0,m_2)} \ar [r]_{(a,c)} &
        m_1' \ar[u]_{(m'_0,m_2')}
      }
$$
in $\ck^2$. This means that
$$
 \xymatrix@=3pc{
        A' \ar@{}\ar[r]^{m'_0} & B' \\
        A \ar [u]^{a} \ar [r]_{m_0} &
        B \ar[u]_{b}
      }
$$
and
$$
 \xymatrix@=3pc{
        C' \ar@{}\ar[r]^{m'_2} & D' \\
        C \ar [u]^{c} \ar [r]_{m_2} &
        D \ar[u]_{d}
      }
$$
are pushouts and $m'_3$ is the induced morphism, i.e., $m'_3m'_0=m'_2m'_1$ and $m'_3b=dm_3$.
It suffices to show that the square
$$
 \xymatrix@=3pc{
        C' \ar@{}\ar[r]^{m'_2} & D' \\
        A' \ar [u]^{m'_1} \ar [r]_{m'_0} &
        B' \ar[u]_{m'_3}
      }
$$
is $\cm$-effective.

Consider a pushout
$$
 \xymatrix@=3pc{
        C' \ar@{}\ar[r]^{p'_0} & P' \\
        A' \ar [u]^{m'_1} \ar [r]_{m'_0} &
        B' \ar[u]_{p'_1}
      }
$$
We must show that the induced morphism $t':P'\to D'$ is in $\cm$. Returning to the original square, we know that the induced morphism $t:P\to D$ is in $\cm$ where
$$
 \xymatrix@=3pc{
        C \ar@{}\ar[r]^{p_0} & P \\
        A \ar [u]^{m_1} \ar [r]_{m_0} &
        B \ar[u]_{p_1}
      }
$$
is a pushout. For this, it suffices to show that
$$
 \xymatrix@=3pc{
        P' \ar@{}\ar[r]^{t'} & D '\\
        P \ar [u]^{p} \ar [r]_{t} &
        D \ar[u]_{d}
      }
$$
is a pushout where $p:P\to P'$ is the induced morphism; that is, $pp_0=p'_0c$ and $pp_1=p'_1b$.

Consider morphisms $u:P'\to X$ and $v:D\to X$ such that $up=vt$. Then 
$$
up'_0c=upp_0=vtp_0=vm_2.
$$
Thus there exists a unique $q:D'\to X$ such that $qm'_2=up'_0$ and $qd=v$. It remains to show that $qt'=u$. We have 
$$
qt'p'_0=qm'_2=up'_0
$$
and $qt'p'_1=qm'_3$. To finish the proof, we need that $up'_1=qm'_3$ because then $qt'=u$.
We have
$$
qm'_3m'_0=qm'_2m'_1=up'_0m'_1=up'_1m'_0
$$
and
$$
qm'_3b=qdm_3=vm_3=vtp_1=upp_1=up'_1b.
$$
Hence $qm'_3=up'_1$.

(d) $\cm!$ is nice: Since $\cm$ is closed under retracts, $\cm!$ is closed under retracts.

(e) $\cm!$ is $\aleph_0$-continuous: see \cite[2.11]{lrv-cell}.
\end{proof}

\begin{thm}\label{thmstabexc}
Let $\ck$ be a locally presentable category, and let $\cm$ be a nice, accessible, and $\aleph_0$-continuous class of morphisms in $\ck$.  If $\ck_\cm$ has a stable independence notion, it is excellent.
\end{thm}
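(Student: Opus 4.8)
The plan is to prove by induction on $n\geq 1$ that $\ck_\cm$ carries an $n$-dimensional stable independence notion $\Gamma_n$, the $\Gamma_n$ forming a tower (each $\Gamma_{n+1}$ of the shape $(\Gamma_n,-)$), such that $(\ck_\cm)^{\Gamma_n}$ has directed colimits; this is exactly the statement that $\ck_\cm$ is excellent. The case $n=1$ is trivial, since $(\ck_\cm)^{\Gamma_1}=\ck_\cm$ has directed colimits by $\aleph_0$-continuity of $\cm$. The case $n=2$ is the hypothesis, sharpened by canonicity (Fact~\ref{thmlrvcell}): the stable independence notion on $\ck_\cm$ is forced to be the one given by $\cm$-effective squares, and $\cm$ is cofibrantly generated in the locally presentable category $\ck$.

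For the inductive step I would strengthen the statement, carrying along the datum that at stage $n$ the category $(\ck_\cm)^{\Gamma_n}$ is realized as $(\cl_n)_{\cm_n}$ for a \emph{locally presentable} category $\cl_n$ and a class $\cm_n$ of morphisms in $\cl_n$ that is nice, accessible, $\aleph_0$-continuous \emph{and cofibrantly generated}, with $(\cl_1,\cm_1)=(\ck,\cm)$; the $n=2$ instance just unwinds Definitions~\ref{defmeff} and \ref{defhdimind}, with $\cl_2=\ck^2$ and $\cm_2=\cm!$ (Proposition~\ref{thmindstep}). Given this at stage $n$, Fact~\ref{thmlrvcell} tells us that the stable independence notion on $(\cl_n)_{\cm_n}=(\ck_\cm)^{\Gamma_n}$ is carried by $\cm_n$-effective squares, which is $\aleph_0$-continuous by Fact~\ref{factmeffwstab}; so $(\ck_\cm)^{\Gamma_{n+1}}=((\ck_\cm)^{\Gamma_n})_\downarrow$ is a category of $\cm_n$-effective squares sitting inside the locally presentable arrow category $\cl_n^2$, and it has directed colimits. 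I would then take $\cl_{n+1}:=\cl_n^2$ and $\cm_{n+1}:=\cm_n!$. Proposition~\ref{thmindstep} supplies niceness and $\aleph_0$-continuity of $\cm_{n+1}$ in $\cl_{n+1}$ for free; cofibrant generation of $\cm_{n+1}$ I would deduce from that of $\cm_n$, using the decomposition of a $\cm_n$-effective morphism as a pushout square in $\cl_n$ followed by a codomain-extension along a $\cm_n$-morphism (implicit in Proposition~\ref{thmindstep}(c)) to exhibit a generating \emph{set} for $\cm_{n+1}$ built from one for $\cm_n$; and accessibility of $\cm_{n+1}$ then follows from cofibrant generation together with niceness and $\aleph_0$-continuity by the standard small-object-argument analysis of cellular classes in locally presentable categories. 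With $\cm_{n+1}$ nice, accessible, $\aleph_0$-continuous and cofibrantly generated in the locally presentable $\cl_{n+1}$, Fact~\ref{thmlrvcell} returns a stable independence notion $\Gamma^{(n)}$ on $(\cl_{n+1})_{\cm_{n+1}}=(\ck_\cm)^{\Gamma_{n+1}}$; setting $\Gamma_{n+1}=(\Gamma_n,\Gamma^{(n)})$ and recording that $(\ck_\cm)^{\Gamma_{n+1}}$ has directed colimits completes the induction, and hence the proof that $\ck_\cm$ is excellent.

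The work beyond Proposition~\ref{thmindstep} --- which does the real lifting --- is twofold, and this is where I expect the main difficulty: first, the bookkeeping identifying the iterated ``category of independent squares'' of Definition~\ref{defhdimind} with the iterated ``category of effective morphisms'' $(\cl_n^2)_{\cm_n!}$, where one must keep careful track of how the edges of the higher cubes are constrained and confirm that the canonical stable independence notion on $(\cl_n)_{\cm_n}$ selects precisely the $\cm_n$-effective squares of Definition~\ref{defmeff}; and second, the verification that the operation $\cm\mapsto\cm!$ preserves cofibrant generation, for which the pushout description in the proof of Proposition~\ref{thmindstep}(c) is the essential tool. Everything else --- niceness, $\aleph_0$-continuity, accessibility, directed colimits, and assembling the $\Gamma_n$ into a tower --- then follows formally from Proposition~\ref{thmindstep}, Fact~\ref{thmlrvcell}, and Fact~\ref{factmeffwstab}.
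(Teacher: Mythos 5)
There is a genuine gap, and it sits exactly where you wave it off as ``bookkeeping.'' The identification $(\cl_{n+1})_{\cm_{n+1}}=(\ck_\cm)^{\Gamma_{n+1}}$ on which your induction rests is false. Already at the first step: $(\cl_2)_{\cm_2}=(\ck^2)_{\cm!}$ has as objects \emph{all} morphisms of $\ck$ and as morphisms all $\cm$-effective squares, whereas by Definition~\ref{defhdimind} the category $(\ck_\cm)^{\Gamma_2}=(\ck_\cm)_\downarrow$ has as objects only the morphisms lying in $\cm$ (and its independent squares have all edges in $\cm$). At higher levels the discrepancy compounds: the objects of $(\ck_\cm)^{\Gamma_{n+1}}$ are the morphisms of $(\ck_\cm)^{\Gamma_n}$, i.e.\ the effective squares with suitably constrained edges, which form a proper full subcategory of $\cl_{n+1}=\cl_n^2$. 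Consequently Fact~\ref{thmlrvcell}, applied to $(\cl_{n+1},\cm_{n+1})$, hands you a stable independence notion on the wrong category; Definition~\ref{defhdimind} requires one on $(\ck_\cm)^{\Gamma_n}$ itself. Restricting is not free: weak stability and $\aleph_0$-continuity on the subcategory have to be re-derived (the paper re-runs the proofs of \cite[2.7]{lrv-cell} and \cite[2.11]{lrv-cell} for this), and---the real crux---accessibility of $((\ck_\cm)^{\Gamma_n})_\downarrow$ does \emph{not} follow from accessibility of the larger category of all $\cm_n$-effective squares. The paper's proof supplies precisely this missing step: it characterizes $\cm_n$-effectiveness of a cube by the $\cm_{n-1}$-effectiveness of the derived square obtained by pushing out the left and right faces, exhibits $(\ck_\cm)^{\Gamma_{n+1}}$ as a pseudopullback of accessible categories along a transportable functor, and invokes \cite[5.1.1, 5.1.6]{makkai-pare} to conclude accessibility. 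Your proposal contains no substitute for this argument.

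A second, independent weak point is that your route requires reapplying Fact~\ref{thmlrvcell} at every level, and hence needs the unproved claim that the operation $\cm\mapsto\cm!$ preserves cofibrant generation (and yields accessibility of $\cm_{n+1}$). The paper deliberately avoids this: it uses Fact~\ref{thmlrvcell} only once, at the bottom, and at higher levels gets by with Proposition~\ref{thmindstep} (niceness and $\aleph_0$-continuity of $\cm_n!$), Fact~\ref{factmeffwstab}, and the pseudopullback argument; nowhere does it establish, or need, cofibrant generation or accessibility of $\cm_n!$. Your one-sentence sketch (decompose an effective square as a pushout square followed by a codomain extension and cook up a generating set) is far from a proof and is not covered by any fact you cite. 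Even if that preservation statement were established, the object-class mismatch described above would remain, so the proposal as written does not prove the theorem.
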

\begin{proof}
We have the obvious one-dimensional stable independence notion on $\ck_\cm$, with $(\ck_\cm)^{\Gamma_1}=\ck_\cm$ and $\Gamma_1$ consisting precisely of $\cm$, the class of morphisms in $\ck_\cm$.
Following Fact~\ref{thmlrvcell}, $\cm$-effective squares form a stable, $\aleph_0$-continuous independence notion $\Gamma$ on $\ck_\cm$.  Take $\Gamma_2=(\Gamma_1,\Gamma)$. Then 
$$
(\ck_\cm)^{\Gamma_2}=(\ck_\cm)_\downarrow\subseteq(\ck_\cm)^2\subseteq\ck^2
$$ 
is an accessible category closed under directed colimits in $\ck^2$.

Assume that we have an $n$-dimensional stable independence notion $\Gamma_n$, $n>1$, on $\ck_\cm$ where  
$$(\ck_\cm)^{\Gamma_n}\subseteq(\ck_\cm)^{2^{n-1}}\subseteq\ck^{2^{n-1}}
$$
is an accessible category closed under directed colimits in $\ck^{2^{n-1}}$ and a nice and $\aleph_0$-continuous class $\cm_{n-1}$ of morphisms in $\ck^{2^{n-1}}$. Following Proposition \ref{thmindstep}, $\cm_n=(\cm_{n-1})!$ is nice and $\aleph_0$-continuous class of morphisms in $\ck^{2^n}$. We have to show that $\cm_n$-effective squares yield a stable independence notion on $(\ck_\cm)^{\Gamma_n}$. Following Fact \ref{factmeffwstab}, $\cm_n$-effective squares form a weakly stable independence notion on $(\ck^{2^n})_{\cm_n}$ which is, moreover, $\aleph_0$-continuous. $(\ck_\cm)^{\Gamma_n}$ is a full subcategory of $(\ck^{2^n})_{\cm_n}$ consisting of those $\cm_n$-squares which are $\cm_n$-effective. Since $\cm_n$ is nice, the proof 
of \cite[2.7]{lrv-cell} yields that $\cm_n$-effective squares yield 
a weakly stable independence notion on $(\ck_\cm)^{\Gamma_n}$. Similarly, the proof of \cite[2.11]{lrv-cell} yields that this weakly stable independence notion is $\aleph_0$-continuous.

It remains to show that the category $(\ck_\cm)^{\Gamma_{n+1}}=((\ck_\cm)^{\Gamma_n})_\downarrow$ is accessible.

We note that an $\cm_n$-square
$$
 \xymatrix@=3pc{
        c \ar@{}\ar[r]^{(f'_0,f'_1)} & d \\
        a \ar [u]^{(g_0,g_1)} \ar [r]_{(f_0,f_1)} &
        b \ar[u]_{(h_0,h_1)}
      }
$$
is in fact a cube of the following form:
$$ 
\xymatrix@=3pc{
        & D_1 \ar[rr]^{d} & & D_2\\
        C_1 \ar[rr]^>>>>>>>>>>{c}\ar[ur]^{f_0'} &
        & C_2 \ar[ur]_{f_1'} & \\
        & B_1\ar'[u][uu]^{h_0}\ar'[r][rr]^b & & B_2\ar[uu]_{h_1} & \\
        A_1\ar[ur]^{f_0}\ar[rr]_a\ar[uu]^{g_0} & & A_2\ar[ur]_{f_1}\ar[uu]_>>>>>>>>>{g_1} &  
      }
$$
where the top, bottom, front and rear squares are all $\cm_{n-1}$-effective. 

Such an $\cm_n$-square is $\cm_n$-effective if and only if the derived square induced by pushing out on the right and left hand sides of the cube is $\cm_{n-1}$-effective.  That is, if the following squares are pushouts
$$
 \xymatrix@=3pc{
        C_1 \ar@{}\ar[r]^{p_0} & P & & C_2 \ar@{}\ar[r]^{p'_0} & P'\\
        A_1 \ar [u]^{g_0} \ar [r]_{f_0} &
        B_1 \ar[u]_{p_1} & & A_2 \ar [u]^{g_1} \ar [r]_{f_1} &
        B_2 \ar[u]_{p'_1}
      }
$$
and $q:P\to D_1$, $q':P'\to D_2$ and $p:P\to P'$ are the obvious induced morphisms, the derived square
$$
 \xymatrix@=3pc{
        P' \ar@{}\ar[r]^{q'} & D_2 \\
        P \ar [u]^{p} \ar [r]_{q} &
        D_1 \ar[u]_{d}
      }
$$
must be $\cm_{n-1}$-effective.  

The category $(\ck_\cm)^{\Gamma_{n+1}}$ thus consists of the full subcategory of $((\ck_\cm)^{\Gamma_n})^2$ on the $\cm_{n}$-effective squares whose derived squares are $\cm_{n-1}$-effective.  In particular, the following is a pullback of categories:
$$
 \xymatrix@=3pc{
        ((\ck_\cm)^{\Gamma_n})^2 \ar@{}\ar[r]^{F} & (\ck_\cm)^{2^{n-1}} \\
        \ck^{\Gamma_{n+1}}  \ar [u]^{\bar{G}} \ar [r]_{\bar{F}} &
        (\ck_\cm)^{\Gamma_n}  \ar[u]_{G}
      }
$$
where $F$ sends an $\cm_n$-square to its derived square and $G$ is the inclusion.  The functor $G$ is transportable because $\cm_{n-1}$-effective squares are closed
under isomorphisms of $\cm_{n-1}$-squares. Following \cite[5.1.1]{makkai-pare}, the pullback above is in fact a \emph{Pullback} (or \emph{bipullback}) of the corresponding categories. Hence $(\ck_\cm)^{\Gamma_{n+1}}$ is an accessible category
(see \cite[5.1.6]{makkai-pare}). 
\end{proof}

\begin{remark}
The category $\Rmod_{pure}$ from Theorem \ref{r-mod-pure} is excellent. Similarly, the category $\Rmod_{emb}$ of $R$-modules and regular monomorphisms. The latter follows from the fact that $\Rmod$ has effective unions, hence regular monomorphisms are cofibrantly generated. In fact, the same is true in any Grothendieck abelian category or Grothendieck topos (\cite{effective-unions}).
\end{remark}
 
\bibliographystyle{alpha}
\bibliography{indep-extensions}

\newcommand{\etalchar}[1]{$^{#1}$}
\begin{thebibliography}{GKK15b}

\bibitem[ABV19]{abv}
N.~Ackerman, W.~Boney, and S.~Vasey.
\newblock Categoricity in multiuniversal classes.
\newblock {\em Annals of Pure and Applied Logic}, 170(11):102712, 2019.

\bibitem[AR94]{adamek-rosicky}
J.~Ad{\'a}mek and J.~Rosick{\'y}.
\newblock {\em Locally presentable and accessible categories}.
\newblock London Math. Society Lecture Notes. Cambridge University Press, 1994.

\bibitem[Bar88]{effective-unions}
M.~Barr.
\newblock On categories with effective unions.
\newblock In F.~Borceux, editor, {\em Categorical Algebra and its
  Applications}, volume 1348 of {\em Lecture Notes in Mathematics}, pages
  19--35. Springer, 1988.

\bibitem[BCG{\etalchar{+}}]{bcg}
J.~Baldwin, W.~Calvert, J.~Goodrick, A.~Villaveces, and A.~Walczak-Typke.
\newblock Abelian groups as aecs.
\newblock Preprint, \url{www.aimath.org/WWN/categoricity/abeliangroups 10 1
  3.tex}.

\bibitem[BR12]{beke-rosicky}
T.~Beke and J.~Rosick{\'y}.
\newblock Abstract elementary classes and accessible categories.
\newblock {\em Annals of Pure and Applied Logic}, 163:2008--2017, 2012.

\bibitem[GKK15a]{gkkamalgfunct}
J.~Goodrick, B.~Kim, and A.~Kolesnikov.
\newblock Amalgamation functors and boundary properties in simple theories.
\newblock {\em Israel Journal of Mathematics}, 193:169--207, 2015.

\bibitem[GKK15b]{gkktypeamalg}
J.~Goodrick, B.~Kim, and A.~Kolesnikov.
\newblock Type-amalgamation properties and polygroupoids in stable theories.
\newblock {\em Journal of Mathematical Logic}, 15(1):1550004, 2015.

\bibitem[HS00]{HySh}
T.~Hyttinen and S.~Shelah.
\newblock Strong splitting in stable homogeneous models.
\newblock {\em Annals of Pure and Applied Logic}, 103(1-3):201--228, 2000.

\bibitem[KMA20]{univ-modules-pp-v2}
T.~Kucera and M.~Mazari-Armida.
\newblock On universal modules with pure embeddings.
\newblock {\em Mathematical Logic Quarterly}, 66(4):395--408, 2020.

\bibitem[LPRV20]{lprv-purecofgen-v3}
M.~Lieberman, L.~Positselski, J.~Rosick{\'y}, and S.~Vasey.
\newblock Cofibrant generation of pure monomorphisms.
\newblock {\em Journal of Algebra}, 560:1297--1310, 2020.

\bibitem[LRV]{lrv-cell}
M.~Lieberman, J.~Rosick\'y, and S.~Vasey.
\newblock Cellular categories and stable independence.
\newblock Submitted. arXiv:1904.05691v3.

\bibitem[LRV19a]{indep-categ-advances}
M.~Lieberman, J.~Rosick{\'y}, and S.~Vasey.
\newblock Forking independence from the categorical point of view.
\newblock {\em Advances in Mathematics}, 346:719--772, 2019.

\bibitem[LRV19b]{LRVweak}
M.~Lieberman, J.~Rosick\'y, and S.~Vasey.
\newblock Weak factorization systems and stable independence.
\newblock Preprint, arxiv:1904.05691v2, 2019.

\bibitem[LRV20]{internal-improved-v3}
M.~Lieberman, J.~Rosick{\'y}, and S.~Vasey.
\newblock Sizes and filtrations in accessible categories.
\newblock {\em Israel Journal of Mathematics}, 238:243--278, 2020.

\bibitem[MA]{mazarmnonelem}
M.~Mazari-Armida.
\newblock Some stable non-elementary classes of modules.
\newblock To appear in the \emph{Journal of Symbolic Logic}.
  \url{https://arxiv.org/abs/2010.02918v2}.

\bibitem[MA21a]{MAFuchs}
M.~Mazari-Armida.
\newblock A model theoretic solution to a problem of {L}{\'a}szl{\'o} {F}uchs.
\newblock {\em Journal of Algebra}, 567:196--209, 2021.

\bibitem[MA21b]{mazsup}
M.~Mazari-Armida.
\newblock Superstability, noetherian rings, and pure-semisimple rings.
\newblock {\em Annals of Pure and Applied Logic}, 172(3):102917, 2021.

\bibitem[MP89]{makkai-pare}
M.~Makkai and R.~Par{\'e}.
\newblock {\em Accessible Categories: The Foundations of Categorical Model
  Theory}, volume 104 of {\em Contemporary Mathematics}.
\newblock American Mathematical Society, 1989.

\bibitem[MR14]{mr}
M.~Makkai and J.~Rosick{\'y}.
\newblock Cellular categories.
\newblock {\em Journal of Pure and Applied Algebra}, 218(9):1652--1664, 2014.

\bibitem[MRV14]{fat-small-obj}
M.~Makkai, J.~Rosick{\'y}, and L.~Vok{\v r}{\'{\i}}nek.
\newblock On a fat small object argument.
\newblock {\em Advances in Mathematics}, 254:49--68, 2014.

\bibitem[Ros97]{rosicky-sat-jsl}
J.~Rosick{\'y}.
\newblock Accessible categories, saturation and categoricity.
\newblock {\em The Journal of Symbolic Logic}, 62(3):891--901, 1997.

\bibitem[She83a]{she83}
S.~Shelah.
\newblock Classification theory for non-elementary classes i: The number of
  uncountable models of $\psi\in l_{\omega_1,\omega}$. part a.
\newblock {\em Israel Journal of Mathematics}, 46(3):214--240, 1983.

\bibitem[She83b]{she83b}
S.~Shelah.
\newblock Classification theory for non-elementary classes i: The number of
  uncountable models of $\psi\in l_{\omega_1,\omega}$. part b.
\newblock {\em Israel Journal of Mathematics}, 46(4):241--273, 1983.

\bibitem[She90]{shelahfobook}
Saharon Shelah.
\newblock {\em Classification Theory and the Number of non-isomorphic Models},
  volume~92 of {\em Studies in logic and the foundations of mathematics}.
\newblock North-Holland, 2nd edition, 1990.

\bibitem[She17]{shuniv}
S.~Shelah.
\newblock Universal structures.
\newblock {\em Notre Dame Journal of Formal Logic}, 58(2):159--177, 2017.

\bibitem[SV]{multidim-v2}
S.~Shelah and S.~Vasey.
\newblock Categoricity and multidimensional diagrams.
\newblock Preprint. URL: \url{ https://arxiv.org/abs/1805.06291v2}.

\bibitem[Zil05]{zilbexcellent}
B.~Zilber.
\newblock A categoricity theorem for quasi-minimal excellent classes.
\newblock In A.~Blass and Y.~Zhang, editors, {\em Logic and its applications},
  volume 380 of {\em Contemporary Mathematics}, pages 297--306. American
  Mathematical Society, 2005.

\end{thebibliography}

\end{document}